\theoremstyle{plain}
\newtheorem{lemma}{Lemma}
\newtheorem{theorem}{Theorem}
\theoremstyle{definition}
\newtheorem{definition}{Definition}
\newtheorem{remark}{Remark}
\newcommand{\Z}{\mathbb{Z}}
\newcommand{\N}{\mathbb{N}}
\newcommand{\R}{\mathbb{R}}
\renewcommand{\H}{\ensuremath{\mathfrak{H}}}
\DeclareMathOperator*{\Res}{Res}
\renewcommand{\P}{\mathbb{P}}
\newcommand{\E}{\mathrm{\mathbb E}}
\newcommand{\floor}[1]{\ensuremath{\left\lfloor #1 \right\rfloor}}
\newcommand{\ceil}[1]{\ensuremath{\left\lceil #1 \right\rceil}}
\renewcommand{\vec}[1]{ \ensuremath{ {\bf#1}} }
\title[The height and range of Watermelons without wall]{The height and range of Watermelons without wall}
\author[Thomas Feierl]{Thomas Feierl$^\ddagger$}
\address{Thomas Feierl \\ Fakult\"at f\"ur Mathematik \\ Universit\"at Wien \\ Nordbergstr. 15 \\ 1090 Wien \\ Austria}
\thanks{$^\ddagger$ Research supported by the Austrian Science Foundation FWF, grant S9607-N13}
\date{\today}
\begin{document}
\begin{abstract}
		We determine the weak limit of the distribution of the random variables ``height'' and ``range'' on the set
		of $p$-watermelons without wall restriction as the number of steps tends to infinity.
		Additionally, we provide asymptotics for the moments of the random variable ``height''.
\end{abstract}
\maketitle

\section{Introduction}
%
%
The model of \emph{vicious walkers} was originally introduced by Fisher~\cite{MR751710} as a model
for wetting and melting processes. In general, the vicious walkers model is concerned with $p$ random
walkers on a $d$-dimensional lattice. In the lock step model, at each time step all of the walkers
move one step in one of the allowed directions, such that at no time any two random walkers share the 
same lattice point.

A configuration that attracted much interest amongst mathematical physicists and combinatorialists is
the \emph{watermelon configuration}, which is the model underlying this paper (see Figure~\ref{fig:wm:nowall}
for an example). This configuration can be studied with or without the presence of an impenetrable wall.
By tracing the paths of the vicious walkers through the lattice we can identify the (probabilistic)
vicious walkers model with certain sets of non-intersecting lattice paths. It is exactly this
equivalent point of view that we adopt in this paper.
We proceed with a precise definition. A \emph{$p$-watermelon of length $2n$} is a set of $p$ lattice
paths in $\Z^2$ satisfying the following conditions:
\begin{itemize}
		\item the $i$-th path starts at position $(0,2i)$ and ends at $(2n,2i)$,
		\item the paths consist of steps from the set $\left\{ (1,1),(1,-1) \right\}$ only and
		\item the paths are \emph{non-intersecting}, that is, at no time any two path share the same lattice point.
\end{itemize}
An example of a $4$-watermelon of length $16$ is shown in Figure~\ref{fig:wm:nowall} (for the moment, the dashed lines and the
labels should be ignored).
\begin{figure}[width=8cm]
		\begin{center}
				\includegraphics{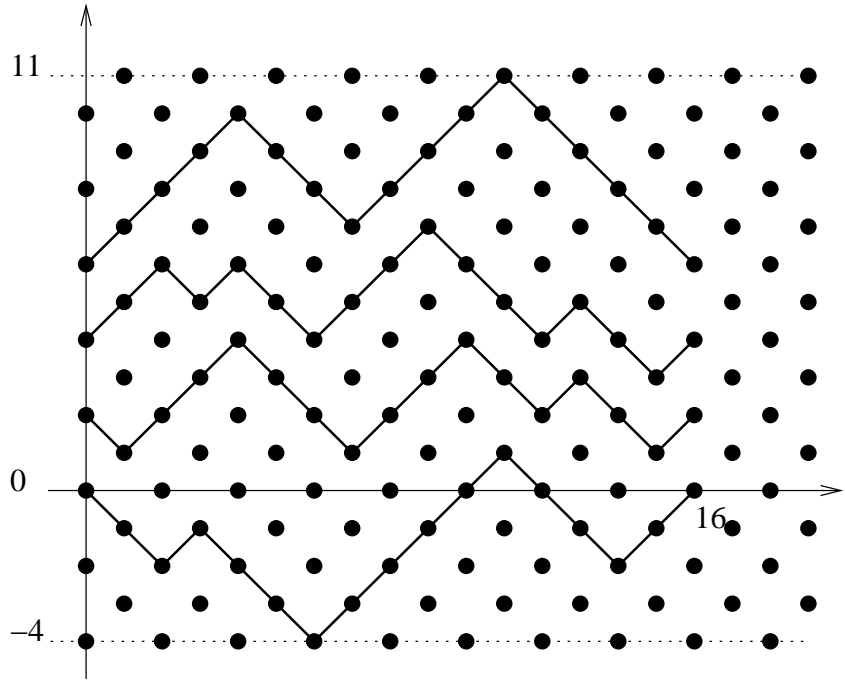}
		\end{center}
		\caption{Example of a $4$-watermelon of length $16$ without wall, height $11$, depth $4$ and range $15$}
		\label{fig:wm:nowall}
\end{figure}

Since its introduction, the vicious walkers model has been studied in numerous papers.
While early results mostly analyse the vicious walkers model in the continuum limit, there are nowadays
many results for certain configurations  directly based on the lattice path description given above.
With the increasing number of results it became clear that vicious walkers are very important objects
in mathematical areas far beyond its original scope.
For example, Guttmann, Owczarek and Viennot~\cite{MR1651492} related the star and watermelon
configurations to the theory of Young tableaux and integer partitions.
Later, Krattenthaler, Guttmann and Viennot~\cite{MR1801472} proved new, exact as well as asymptotic,
results for the number of certain configurations of vicious walkers.

The vicious walkers model is also very closely related to random matrix theory, as can be seen 
from articles by, e.g.,  Baik~\cite{MR1773413}, Johansson~\cite{MR1900323} and Nagao and Forrester~\cite{MR1877962}.
Later, Katori and Tanemura~\cite{MR2029612} and Gillet~\cite{gillet} studied the diffusion scaling
limit of certain configurations of vicious walkers, namely stars and watermelons, respectively.
%

In 2003, Bonichon and Mosbah~\cite{MR2022577} presented an algorithm for uniform random generation of
watermelons, which relies on the counting results by Krattenthaler, Guttmann and Viennot~\cite{MR1801472}.
Amongst other things, Bonichon and Mosbah studied the parameter height on the set of watermelons (with and without wall).

In this paper we rigorously analyse the following two parameters on the set of $p$-watermelons:
\begin{itemize}
		\item The \emph{height} of a watermelon is the maximum ordinate reached by its top most branch.
		\item The \emph{range} of a watermelon is the difference of the maximum of its top most branch and
				the minimum of its bottom most branch (the \emph{depth} of the watermelon).
\end{itemize}
The $4$-watermelon depicted in Figure~\ref{fig:wm:nowall} has the height $11$ and the range $11+4=15$.

Katori et. al.~\cite{katori2} and Schehr et. al.~\cite{schehr} studied the parameter ``height'' in the continuous limit,
and recovered the leading terms for some of the asymptotics proved in
\cite{watermelons:withwall,watermelons:withoutwall}. Additionally, Schehr et. al. gave some arguments concerning
the behaviour of the parameter ``height'' as the number of walkers tends to infinity.

Now, consider the set $\mathfrak m_{n}^{(p)}$ of $p$-watermelons of length $2n$, endowed with the uniform probability measure.
We can then speak of the random variables ``height'', denoted by $H_{n,p}$, and ``range'', denoted by $R_{n,p}$, on this set.
We determine the weak limits of $H_{n,p}$ and $R_{n,p}$ as the number $n$ of steps tends to infinity (see Theorem~\ref{thm:height:weak_limit}
and Theorem~\ref{thm:range_weaklimit}, respectively).
Additionally, we determine asymptotics for the moments of $H_{n,p}$.
In particular, we prove that the $s$-th moment of the random variable ``height'' behaves
like $\kappa_sn^{s/2}+\tau_sn^{(s-1)/2}+O\left( n^{s/2-1} \right)$ for some explicit numbers $\kappa_s$ and
$\tau_s$, see Theorem~\ref{thm:height_moments}.

Techniques similar to those applied in this paper can also be used to analyse the random variable height
on the set of $p$-watermelons under the presence of an impenetrable wall.
For details we refer to~\cite{watermelons:withwall}.

The paper is organised as follows. The next section contains some well known results that are
needed in the subsequent sections. In Section~\ref{sec:height} we consider the random variable ``height'',
and we determine the weak limit as well as asymptotics for all moments.
In the last section, we determine the weak limit of the random variable ``range''.

\section{Preliminaries}\label{sec:preliminaries}
In this section we collect several results which will be needed in the two subsequent sections. All these
results are either well known in the literature and/or can easily be derived by means of standard techniques. We,
therefore, remain very brief, give only a few comments on the proofs and in each case refer to the corresponding literature for details.

We start with an exact enumeration result for the total number of watermelons confined to a horizontal strip.
\begin{lemma}
		The number $m_{n,h,k}^{(p)}$ of $p$-watermelons without wall, length $2n$, height $<h$ and depth $>-k$ is given by
		\begin{align*}
			m_{n,h,k}^{(p)} 
				&=  \det_{0\le i,j<p}\left( \sum_{\ell\in\Z}\left( \binom{2n}{n+\ell(h+k)+i-j}-\binom{2n}{n+\ell(h+k)+h-i-j} \right) \right).
		\end{align*}
		The total number $m_{n}^{(p)}$ of $p$-watermelons is given by
		\begin{align*}
			m_{n}^{(p)} &= \det_{0\le i,j<p}\left( \binom{2n}{n+i-j} \right).
		\end{align*}
\end{lemma}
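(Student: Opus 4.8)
The plan is to combine the Lindström--Gessel--Viennot (LGV) lemma with the reflection principle. A $p$-watermelon is a family of non-intersecting lattice paths with sources $A_j=(0,2j)$ and sinks $B_i=(2n,2i)$ for $0\le i,j<p$. Since the sources and the sinks are both arranged in strictly increasing order of height along the lines $x=0$ and $x=2n$, and since every step moves one unit to the right, the only permutation $\sigma$ for which a non-crossing family connecting $A_{\sigma(j)}$ to $B_j$ can exist is the identity. Hence LGV applies with a single positive determinant, giving
\[
	\det_{0\le i,j<p}\bigl( P_{i,j} \bigr),
\]
where $P_{i,j}$ denotes the number of admissible single paths from $A_j$ to $B_i$.

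For the total number the computation is immediate: a free $\pm1$-path of length $2n$ from height $2j$ to height $2i$ uses $n+(i-j)$ up-steps and $n-(i-j)$ down-steps, so $P_{i,j}=\binom{2n}{n+i-j}$, which is exactly the claimed formula for $m_n^{(p)}$. For the confined count I would take $P_{i,j}$ to be the number of single paths from $(0,2j)$ to $(2n,2i)$ staying strictly below $y=h$ and strictly above $y=-k$, and evaluate it by the iterated reflection principle with respect to the two barriers $y=h$ and $y=-k$. The group generated by these two reflections is the infinite dihedral group, whose orbit of the endpoint $2i$ consists of the images $2\ell(h+k)+2i$ (reached through an even number of reflections, contributing a plus sign) and $2\ell(h+k)+2h-2i$ (odd number of reflections, minus sign) for $\ell\in\Z$. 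Converting each image endpoint $2m$ into the free count $\binom{2n}{n+m-j}$ yields
\[
	P_{i,j}=\sum_{\ell\in\Z}\left( \binom{2n}{n+\ell(h+k)+i-j}-\binom{2n}{n+\ell(h+k)+h-i-j} \right).
\]
The $\ell=0$ terms serve as a sanity check: the free count $\binom{2n}{n+i-j}$ together with its single reflection over $y=h$, whose image endpoint $2h-2i$ gives $\binom{2n}{n+h-i-j}$. Substituting this $P_{i,j}$ into the LGV determinant produces the stated expression for $m_{n,h,k}^{(p)}$, and the sum is effectively finite because the binomial coefficients vanish once their lower index leaves $\{0,\dots,2n\}$, so no convergence question arises.

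The step I expect to be the main obstacle is justifying that the confined enumeration again reduces to a determinant of confined single-path counts. The clean way is to note that LGV holds verbatim on the directed acyclic graph obtained by restricting the lattice to the horizontal strip $-k<y<h$: confining the whole watermelon to the strip is equivalent to confining each constituent path, since the paths are linearly ordered and non-intersecting, so the topmost path alone controls the height and the bottommost alone controls the depth. Once the ambient graph is cut down to the strip, the LGV sign condition is unaffected, and the remaining work is purely the reflection-principle evaluation of the two-barrier path count, where one must verify that the alternating orbit sum enumerates precisely the paths avoiding both barriers. Both ingredients are standard and available in the cited literature, so I would invoke them and keep the argument brief, as the statement announces.
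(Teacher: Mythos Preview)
Your proposal is correct and matches the paper's own argument: the paper simply states that the lemma ``follows immediately from the well-known Lindstr\"om--Gessel--Viennot formula \dots\ together with an iterated reflection principle,'' and you have spelled out precisely those two ingredients, including the dihedral-orbit computation that produces the alternating sum over $\ell\in\Z$. Your remark that LGV may be applied directly on the strip graph (so that confinement of the whole watermelon reduces to confinement of each branch) is the right justification and is implicit in the paper's one-line proof.
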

This lemma follows immediately from the well-known Lindstr\"om--Gessel--Viennot formula 
(see \cite[Corollary 3]{gesselviennot} or \cite[Lemma 1]{MR0335313}), together with an iterated 
reflection principle.

\begin{remark}
		Since any $p$-watermelon without wall and length $2n$ has depth $>-n-1$, we see that the number of watermelons with height $<h$
		and no restriction on the depth is given by $m_{n,h,n+1}^{(p)}$. For the sake of convenience, this quantity will also be denoted by
		$m_{n,h}^{(p)}$. In this special case, the determinantal expression above simplifies to
		\begin{align*}
				m_{n,h}^{(p)} &= \det_{0\le i,j<p}\left( \binom{2n}{n+i-j}-\binom{2n}{n+h-i-j} \right).
		\end{align*}
\end{remark}
\begin{lemma}\label{lem:total_num_asymptotics} 
We have
\begin{equation*}
m_{n}^{(p)} 
= \left(\frac{2}{n}\right)^{\binom{p}{2}}\binom{2n}{n}^p\left(\prod_{i=0}^{p-1}i!\right) \left( 1+O(n^{-1}) \right)
\end{equation*}
as $n\to\infty$.
\end{lemma}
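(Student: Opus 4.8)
The plan is to start from the exact determinantal formula $m_n^{(p)} = \det_{0\le i,j<p}\binom{2n}{n+i-j}$ supplied by the preceding lemma, evaluate this determinant in closed form, and then read off the asymptotics by elementary means. The matrix is Toeplitz, since the $(i,j)$-entry $\binom{2n}{n+i-j}$ depends only on $i-j$, and because $\binom{2n}{n+k}=\binom{2n}{n-k}$ it is in fact symmetric. Determinants of this shape admit a classical product evaluation: specialising $a=b=n$ in the standard formula $\det_{1\le i,j\le p}\binom{a+b}{a+i-j}=\prod_{i=1}^{p}\frac{(a+b+i-1)!\,(i-1)!}{(a+i-1)!\,(b+i-1)!}$ gives
\[
	m_n^{(p)} = \prod_{i=1}^{p}\frac{(2n+i-1)!\,(i-1)!}{\bigl((n+i-1)!\bigr)^2}.
\]
One checks the cases $p=1,2$ directly against the determinant as a sanity test; for instance $p=2$ reproduces $\binom{2n}{n}^2\frac{2n+1}{(n+1)^2}$.

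Next I would separate the purely combinatorial prefactor from the $n$-dependent part. Since $\prod_{i=1}^{p}(i-1)!=\prod_{i=0}^{p-1}i!$, which is precisely the factor appearing in the statement, it remains only to analyse $\prod_{i=1}^{p}\frac{(2n+i-1)!}{((n+i-1)!)^2}$. Writing each factor relative to the central binomial coefficient,
\[
	\frac{(2n+i-1)!}{\bigl((n+i-1)!\bigr)^2}
	= \binom{2n}{n}\prod_{\ell=1}^{i-1}\frac{2n+\ell}{(n+\ell)^2},
\]
which follows immediately by cancelling $(2n)!/(n!)^2$ and telescoping the surviving factorials.

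Finally, the asymptotics are straightforward: for each fixed $\ell$ we have $\frac{2n+\ell}{(n+\ell)^2}=\frac{2}{n}\bigl(1+O(n^{-1})\bigr)$, so the inner product over $\ell=1,\dots,i-1$ equals $\left(\frac{2}{n}\right)^{i-1}\bigl(1+O(n^{-1})\bigr)$. Taking the product over $i=1,\dots,p$ and using $\sum_{i=1}^{p}(i-1)=\binom{p}{2}$ collects the factor $\left(\frac{2}{n}\right)^{\binom{p}{2}}\binom{2n}{n}^{p}$; since there are only $p$ (a fixed number of) error factors $1+O(n^{-1})$, their product remains $1+O(n^{-1})$. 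Multiplying back the prefactor $\prod_{i=0}^{p-1}i!$ yields the claimed estimate. The only substantive step is the closed-form determinant evaluation; once it is in hand the rest is routine. If one prefers not to quote it, the same conclusion can be reached directly by factoring $\binom{2n}{n}$ out of every entry and performing column operations to extract the $\binom{p}{2}$ powers of $n^{-1}$, but this is more laborious and the necessary cancellations are exactly what the product formula already encodes.
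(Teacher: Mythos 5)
Your proof is correct and follows essentially the same route as the paper: both start from the closed-form product evaluation of the Toeplitz determinant $\det_{0\le i,j<p}\binom{2n}{n+i-j}$ (your product $\prod_{i=1}^{p}\frac{(2n+i-1)!\,(i-1)!}{((n+i-1)!)^2}$ agrees with the paper's $\binom{2n}{n}^p\prod_{i=0}^{p-1}i!\,\frac{(2n+i)!}{(2n)!}\bigl(\frac{n!}{(n+i)!}\bigr)^2$ after reindexing) and then read off the asymptotics by elementary manipulation. The only difference is that you spell out the routine asymptotic step in more detail than the paper's sketch, which simply cites the determinant evaluation and stops.
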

\begin{proof}[Proof (Sketch)]
		The result is established from the closed form expression for $m_{n}^{(p)}$, viz
		\[
		m_{n}^{(p)} = \det_{0\le i,j<p}\left( \binom{2n}{n+i-j} \right)
		= \binom{2n}{n}^p\left(\prod_{i=0}^{p-1}i!\frac{(2n+i)!}{(2n)!}\left( \frac{n!}{(n+i)!} \right)^2\right).
		\]
		For details on the evaluation of this (and many more) determinant, we refer to \cite{MR1701596}.
\end{proof}
\begin{lemma}
For $|m-z|\le n^{5/8}$, $z$ bounded, and arbitrary $N>1$ we have the asymptotic expansion
		\begin{multline} \label{eq:approx_binom_quot}
				\frac{\binom{2n}{n+m-z}}{\binom{2n}{n}}
				= e^{-m^2/n}
				\sum_{u=0}^{4N+1}\left(\frac{z}{\sqrt n} \right)^{u}\frac{1}{u!}H_u\left( \frac{m}{\sqrt n} \right)\\
				+e^{-m^2/n}\sum_{u=0}^{4N+1}\left(\frac{z}{\sqrt n} \right)^{u}
					\sum_{l=1}^{3N+1}n^{-l}\sum_{k=0}^{u-1}
					\sum_{r=1}^{2l}F_{r,l}\binom{2r}{u-k}\frac{(-1)^{u-k}}{k!}H_k\left( \frac{m}{\sqrt n}  \right)
					\left( \frac{m}{\sqrt n}  \right)^{2r+k-u} \\
					+O\left( e^{-m^2/n}n^{-1-2N} \right)
		\end{multline}
		as $n\to\infty$. 
		Here, the $F_{r,l}$ are some constants the explicit form of which is of no importance in the sequel, and 
		$H_k(z)$ denotes the $k$-th Hermite polynomial, that is,
		\begin{align}
				\frac{H_k(z)}{k!} &= \sum_{m\ge 0}\frac{(-1)^{k-m}}{(k-m)!}\frac{(2z)^{2m-k}}{(2m-k)!},\qquad k\ge 0.
				\label{eq:hermitepolynomial}
 		\end{align}
		\label{lem:approx_binom_quot}
\end{lemma}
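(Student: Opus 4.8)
The plan is to reduce everything to the asymptotic behaviour of the logarithm
\[
\phi(a) := \log\left( \binom{2n}{n+a}\Big/\binom{2n}{n} \right) = 2\log\Gamma(n+1) - \log\Gamma(n+1+a) - \log\Gamma(n+1-a),
\]
so that the quantity to estimate is simply $e^{\phi(m-z)}$ with $a=m-z$. Since $\phi$ is an even function of $a$, its Taylor expansion about $a=0$ involves only even powers and only odd-order polygamma functions,
\[
\phi(a) = -2\sum_{j\ge 1}\frac{\psi^{(2j-1)}(n+1)}{(2j)!}\,a^{2j}.
\]
I would then insert the standard asymptotic expansions of $\psi^{(2j-1)}(n+1)$ in powers of $1/n$ and regroup, writing $\phi(a) = -a^2/n + R(a)$, where every term of $R(a)$ has the shape $(a^2/n)^r\,n^{-l}$ with $l\ge 1$. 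Using $|a|=|m-z|\le n^{5/8}$, hence $a^2/n\le n^{1/4}$, one checks that each such term is $O(n^{-1/2})$, and more precisely that the terms neglected beyond order $n^{-(3N+1)}$ are $O(n^{-1-2N})$; this fixes the summation bound $l\le 3N+1$.

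Next I would exponentiate and split $e^{\phi(m-z)} = e^{-(m-z)^2/n}\,e^{R(m-z)}$. For the Gaussian factor, writing $(m-z)^2=m^2-2mz+z^2$ gives $e^{-(m-z)^2/n}=e^{-m^2/n}\,e^{2(m/\sqrt n)(z/\sqrt n)-(z/\sqrt n)^2}$, and the Hermite generating function $e^{2xt-t^2}=\sum_{u\ge 0}H_u(x)t^u/u!$, evaluated at $x=m/\sqrt n$ and $t=z/\sqrt n$, produces exactly the first (main) sum of the claimed expansion. For the correction factor I would expand $e^{R(m-z)} = 1 + \sum_{l\ge 1}n^{-l}\sum_{r=1}^{2l}F_{r,l}\,(m-z)^{2r}/n^r$, where the constants $F_{r,l}$ absorb all powers of $R$. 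Since a contribution of total order $n^{-l}$ arises from a product of factors of $R$ of orders $l_1,\dots,l_p$ with $\sum l_i=l$, each contributing an $a^2$-degree at most $l_i+1$, the $a^2$-degree is at most $l+p\le 2l$, which explains the range $r\le 2l$.

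Multiplying the two expansions and expanding $(m-z)^{2r}=\sum_i\binom{2r}{i}m^{2r-i}(-z)^i$, I would rewrite each monomial $m^{2r-i}(-z)^i/n^r$ as $(-1)^i(m/\sqrt n)^{2r-i}(z/\sqrt n)^i$ and collect all contributions carrying a fixed total power $u$ of $z/\sqrt n$, which forces $i+k=u$, i.e. $i=u-k$. The Hermite factor $H_k(m/\sqrt n)/k!$ then comes from the Gaussian part, while the binomial factor $\binom{2r}{u-k}(-1)^{u-k}$ together with $(m/\sqrt n)^{2r+k-u}$ comes from the correction part; this reproduces the stated double sum verbatim, and truncating the Hermite expansion at $u=4N+1$ is the remaining bookkeeping choice.

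The hard part is the uniform error estimate, and this is where essentially all the work lies. The truncation of $e^{R(m-z)}$ at $l=3N+1$ is harmless: both the exact and the truncated factors are $1+o(1)$, so their difference is $O(n^{-1-2N})$, and multiplying by the Gaussian factor $O(e^{-m^2/n})$ keeps this under control. The delicate contribution is the truncation of the $z$-series at $u=4N+1$: by Taylor's theorem it leaves a remainder proportional to $n^{-(2N+1)}H_{4N+2}\!\left((m-\xi)/\sqrt n\right)e^{-(m-\xi)^2/n}$ with $|\xi|\le|z|$ bounded, and the issue is to retain the factor $e^{-m^2/n}$ while controlling the growth of the Hermite polynomial over the full range $|m-z|\le n^{5/8}$. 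The point is to keep the Gaussian and Hermite factors together, since $H_{4N+2}(y)e^{-y^2}$ is bounded, and only then extract $e^{-m^2/n}$ using $e^{-(m-\xi)^2/n}=e^{-m^2/n}\bigl(1+O(n^{-3/8})\bigr)$ together with $\phi(m-z)\le -m^2/n+O(n^{-3/8})$, so that $e^{\phi(m-z)}$ and each partial sum are $O(e^{-m^2/n})$. Balancing these truncation errors against the range is precisely what dictates the exponents $5/8$, $4N+1$ and $3N+1$ and yields the total error $O\!\left(e^{-m^2/n}n^{-1-2N}\right)$.
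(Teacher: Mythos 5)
The paper does not actually prove this lemma: it states only that the expansion ``follows from Stirling's approximation for the factorials'' and refers to Lemma~6 of \cite{watermelons:withwall} for details. Your reconstruction follows exactly that intended route (log-gamma/polygamma expansion of $\log\binom{2n}{n+a}-\log\binom{2n}{n}$, extraction of the Gaussian, Hermite generating function $e^{2xt-t^2}=\sum_u H_u(x)t^u/u!$ for the $z$-expansion, recombination of the correction terms), and the structural bookkeeping --- the evenness of $\phi$, the bound $r\le 2l$ from exponentiating terms with $r\le l+1$, the matching $i=u-k$ that produces $\binom{2r}{u-k}(-1)^{u-k}H_k(m/\sqrt n)(m/\sqrt n)^{2r+k-u}$, and the treatment of the Taylor remainder in $z$ by keeping $H_{4N+2}(y)e^{-y^2}$ together before extracting $e^{-m^2/n}$ via $2m\xi/n=O(n^{-3/8})$ --- is correct and identifies the genuinely delicate point.

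The one step that does not close as written is your dismissal of the $l$-truncation: ``both the exact and the truncated factors are $1+o(1)$, so their difference is $O(n^{-1-2N})$'' is not an argument, and the crude bound you use elsewhere, $a^2/n\le n^{1/4}$, gives only $n^{-l}(a^2/n)^{2l}\le n^{-l/2}$, i.e. $O(n^{-(3N+2)/2})$ for the first omitted layer $l=3N+2$, which is \emph{larger} than $n^{-1-2N}$ for every $N\ge 1$. To salvage this you must apply to the correction tail the same device you use for the $z$-remainder: keep the factor $e^{-(m-z)^2/n}$ attached, so that $(a^2/n)^r e^{-a^2/n}$ is bounded and each omitted term contributes $O(e^{-m^2/n}n^{-l})$ with $l\ge 3N+2$, which is amply sufficient. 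Relatedly, you cannot literally sum the Stirling corrections over all $l\ge1$, since the $1/n$-expansions of the polygamma functions are divergent asymptotic series; one must truncate $R(a)$ at a finite order with an explicit remainder bound and only then exponentiate the resulting finite sum, which is what makes the constants $F_{r,l}$ well defined and keeps the tail estimate finite. These are repairable defects of a sketch rather than a wrong approach, but as stated the error bound $O(e^{-m^2/n}n^{-1-2N})$ is not established.
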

The lemma above follows from Stirling's approximation for the factorials.
For a detailed proof we refer to \cite[Lemma 6]{watermelons:withwall}.

\section{Height}\label{sec:height}
In this section we derive asymptotics for the distribution as well as for the moments of the random variable $H_{n,p}$.
As mentioned before, the number of $p$-watermelons with length $2n$ and height $<h$ is given by $m_{n,h}^{(p)}=m_{n,h,n+1}^{(p)}$.
Consequently, we have for the distribution of $H_{n,p}$
\begin{equation}\label{eq:height_cdf}
\P\left\{ H_{n,p}+1 \le h \right\} = \frac{m_{n,h}^{(p)}}{m_{n}^{(p)}}.
\end{equation}
\begin{theorem}\label{thm:height:weak_limit}
		For each fixed $t\in(0,\infty)$ we have the asymptotics
		\begin{equation}
				\P\left\{ \frac{H_{n,p}+1}{\sqrt n}\le t \right\}=
				\frac{2^{-\binom{p}{2}}}{\prod_{j=0}^{p-1}j!}
				\det_{0\le i,j<p}\left( (-1)^iH_{i+j}(0)-H_{i+j}\left( t \right)e^{-t^2} \right)
				+O\left( n^{-1/2}e^{-t^2} \right)
		\end{equation}
		as $n\to\infty$, where $H_a(x)$ denotes the $a$-th Hermite polynomial.
\end{theorem}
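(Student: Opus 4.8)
The plan is to evaluate the ratio in \eqref{eq:height_cdf} at $h=\floor{t\sqrt{n}}$, insert the determinantal formula for $m_{n,h}^{(p)}$ from the Remark, and factor the common $\binom{2n}{n}^{p}$ out of the determinant, so that each entry becomes a difference of binomial quotients to which Lemma~\ref{lem:approx_binom_quot} applies. I would expand the first binomial $\binom{2n}{n+i-j}/\binom{2n}{n}$ with the choice $m=0$, $z=j-i$, and the second $\binom{2n}{n+h-i-j}/\binom{2n}{n}$ with $m=h$, $z=i+j$; both satisfy the hypotheses since $i-j$ and $i+j$ are bounded and $h-i-j=O(\sqrt{n})=o(n^{5/8})$. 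Applying the binomial theorem to $(j-i)^{u}$ and $(i+j)^{u}$ in the leading sums of Lemma~\ref{lem:approx_binom_quot} and collecting powers of $i$ and $j$ turns the normalised entry into the separated form
\begin{equation*}
a_{i,j}=\sum_{a,b\ge0}\frac{i^{a}}{a!}\,\frac{j^{b}}{b!}\,n^{-(a+b)/2}\Bigl((-1)^{a}H_{a+b}(0)-e^{-h^2/n}H_{a+b}\bigl(h/\sqrt{n}\bigr)\Bigr)+(\text{corrections}),
\end{equation*}
where the sign $(-1)^{a}$ and the value $H_{a+b}(0)$ come from $H_{u}(0)$ and the $(-1)$ in $z=j-i$, while $H_{a+b}(h/\sqrt{n})$ and the prefactor $e^{-h^2/n}$ come from the second binomial.

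This display exhibits $a_{i,j}=\sum_{a,b}U_{i,a}W_{a,b}U'_{j,b}$ as a matrix product with $U_{i,a}=i^{a}n^{-a/2}/a!$, $U'_{j,b}=j^{b}n^{-b/2}/b!$ and $W_{a,b}=(-1)^{a}H_{a+b}(0)-e^{-h^2/n}H_{a+b}(h/\sqrt{n})$. I would then evaluate the determinant by the Cauchy--Binet formula, summing $\det(U_{\cdot,A})\det(W_{A,B})\det(U'_{\cdot,B})$ over $p$-element sets $A,B$. Each factor $\det(U_{\cdot,A})$ is a Vandermonde determinant in $0,1,\dots,p-1$ carrying the power $n^{-(\sum A)/2}$, so the dominant contribution comes from the minimal choice $A=B=\{0,\dots,p-1\}$, where the evaluation $\prod_{0\le i<i'<p}(i'-i)=\prod_{j}j!$ cancels the factorials and gives $\det(U_{\cdot,A})=n^{-\binom{p}{2}/2}$. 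Hence
\begin{equation*}
\det_{0\le i,j<p}\bigl(a_{i,j}\bigr)\sim n^{-\binom{p}{2}}\det_{0\le a,b<p}\Bigl((-1)^{a}H_{a+b}(0)-e^{-t^2}H_{a+b}(t)\Bigr),
\end{equation*}
after replacing $h/\sqrt{n}$ by $t$. Dividing by $m_{n}^{(p)}$ via Lemma~\ref{lem:total_num_asymptotics}, the power $n^{-\binom{p}{2}}$ meets $(2/n)^{\binom{p}{2}}$ and the superfactorial $\prod_{j}j!$, leaving exactly the prefactor $2^{-\binom{p}{2}}/\prod_{j}j!$ and the stated determinant.

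The delicate step, which I expect to be the real work, is the error analysis, and in particular showing that the $n^{-1/2}$ remainder carries the factor $e^{-t^2}$ rather than being merely $O(n^{-1/2})$. Here I would expand the numerator $\det_{i,j}(b_{i-j}-b_{h-i-j})$, with $b_{k}=\binom{2n}{n+k}/\binom{2n}{n}$, by multilinearity over its columns into a sum over subsets $S$ of columns taken from the second binomial. Since $e^{-h^2/n}$ is independent of $i,j$ and factors out of each such column (it is even a global prefactor of the whole right-hand side of Lemma~\ref{lem:approx_binom_quot} when $m=h$), every term with $S\ne\emptyset$ is $O(e^{-t^2})$; the unique term with $S=\emptyset$ is exactly $\det_{i,j}(b_{i-j})=m_{n}^{(p)}/\binom{2n}{n}^{p}$, whose closed product form in Lemma~\ref{lem:total_num_asymptotics} is rational in $n$ and hence expands in integer powers of $1/n$ with no half-integer term. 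Consequently every surviving $n^{-1/2}$ contribution involves the second binomial and thus the factor $e^{-h^2/n}=e^{-t^2}(1+O(n^{-1/2}))$, which yields the remainder $O(n^{-1/2}e^{-t^2})$; the discretisation error from $h=\floor{t\sqrt{n}}$ is of the same order and is absorbed. Controlling the non-minimal Cauchy--Binet index sets and the corrections of Lemma~\ref{lem:approx_binom_quot} uniformly for $t$ in compact subsets of $(0,\infty)$ completes the estimate.
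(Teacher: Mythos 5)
Your proposal is correct, and it reaches the paper's asymptotics by a genuinely different mechanism at the key step. The paper introduces auxiliary variables $\vec x=(x_0,\dots,x_{p-1})$, $\vec y=(y_0,\dots,y_{p-1})$, observes that the expanded determinant $D_N(\vec x,\vec y)$ is a polynomial divisible by the double Vandermonde product $\prod_{i<j}(x_j-x_i)(y_j-y_i)$, writes $D_N=n^{-\binom{p}{2}}\frac{\prod(x_j-x_i)(y_j-y_i)}{\prod_j j!^2}\chi(n,h)+O(n^{-1/2}e^{-h^2/n})$, identifies $\chi$ by comparing the coefficient of $\prod_j x_j^jy_j^j$, and only then specialises $x_j=y_j=j$. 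You keep the entries specialised throughout, separate the $i$- and $j$-dependence with the binomial theorem to exhibit the entry matrix as a product $UWU'^{T}$, and extract the leading minor via Cauchy--Binet; the identity $\prod_{0\le i<i'<p}(i'-i)=\prod_j j!$ plays for you exactly the role that the coefficient comparison plays in the paper. Both routes rest on the same two inputs (Lemma~\ref{lem:approx_binom_quot} and Lemma~\ref{lem:total_num_asymptotics}) and the same bookkeeping of half-integer powers of $n$, so neither is substantially shorter; yours avoids the detour through generic variables, while the paper's generic-variable form is reused later for Lemma~\ref{lem:height_moments}, where the subleading coefficients $\xi_k,\eta_k$ are also needed and the coefficient-comparison framework pays off.

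One point where your write-up is actually more explicit than the paper: the justification that the $O(n^{-1/2})$ remainder carries the factor $e^{-t^2}$. The paper only remarks that every power of $x_j$ or $y_j$ costs $n^{-1/2}$, without isolating why the $e^{-h^2/n}$-free part contributes no half-integer correction. Your multilinear expansion over columns, which identifies the unique $e^{-t^2}$-free term as exactly $\det(b_{i-j})=m_n^{(p)}/\binom{2n}{n}^p$ and then invokes its closed product form (a rational function of $n$, hence an expansion in integer powers of $1/n$), is a clean way to supply this. Your handling of the discretisation $h=\floor{t\sqrt n}$ is likewise a detail the paper glosses over. I see no gap.
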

\begin{proof}
	Set $\vec x=(x_0,\dots,x_{p-1})$ and $\vec y=(y_0,\dots,y_{p-1})$, and consider the more general quantity 
	\begin{align*}
		m_{n,h}^{(p)}(\vec x,\vec y) &= \det_{0\le i,j<p}\left( \binom{2n}{n+x_i-y_j}-\binom{2n}{n+h-x_i-y_j} \right).
	\end{align*}
	Factoring $\binom{2n}{n}$ out of each row of the determinant above and replacing each
	entry 
	with its asymptotic expansions as given in Lemma~\ref{lem:approx_binom_quot}, we find the asymptotics
	\[
	m_{n,h}^{(p)}(\vec x,\vec y) = \binom{2n}{n}^p\left( D_N(\vec x,\vec y)+O\left(e^{-h^2/n}n^{-1-2N}\right) \right),
	\qquad n\to\infty,
	\]
	where
	\[
	D_N(\vec x,\vec y)=
	\det_{0\le i,j<p}\left( 
	\sum_{u=0}^{4N+1}\left(
		\left(\frac{y_j-x_i}{\sqrt n}\right)^uT_{u;N}(0,n)
		-\left(\frac{y_j+x_i}{\sqrt n}\right)^uT_{u;N}(h,n)
		\right)\right)
	\]
	and $N>0$ is an arbitrary integer. Here, $T_{u;N}(h,n)$ is given by (see Lemma~\ref{lem:approx_binom_quot})
	\[
	T_{u;N}(h,n)=e^{-h^2/n}\left( 
		\frac{H_u(h/\sqrt n)}{u!}+\sum_{l=1}^{3N+1}n^{-l}\sum_{k=0}^{u-1}
					\sum_{r=1}^{2l}F_{r,l}\binom{2r}{u-k}\frac{(-1)^{u-k}}{k!}H_k\left( \frac{h}{\sqrt n}  \right)
					\left( \frac{h}{\sqrt n}  \right)^{2r+k-u}
	\right).
	\]
	
	The quantity $D_N(\vec x,\vec y)$ is seen to be polynomial in the $x_i$'s and $y_j$'s.
	This polynomial is divisible by the factors $(x_j-x_i)$ and $(y_j-y_i)$ for $0\le i<j<p$, for if $x_j=x_i$ then
	the $j$-th and the $i$-th row are equal and, therefore, the determinant is zero (if $y_j=y_i$ then the $j$-th and $i$-th column are equal).
	Hence,
	\begin{equation*}
	D_N(\vec x,\vec y)=
		n^{-\binom{p}{2}}
		\frac{\prod\limits_{0\le i<j<p}(x_j-x_i)(y_j-y_i)}{\prod\limits_{0\le j<p}j!^{2}}
		\chi(n,h)
		+O(n^{-1/2}e^{-h^2/n})
		,\qquad n\to\infty.
	\end{equation*}
	Here, the error term is determined by noting that every power of $x_j$ and $y_j$ entails a factor
	of $n^{-1/2}$, as can be seen from the definition of $D_N(\vec x,\vec y)$ above.
	The unknown coefficient $\chi(n,h)$ 
	can now be determined by comparing coefficients on both sides of the equation above.
	Comparing the coefficients of $\prod_{j=0}^{p-1}x_j^jy_j^j$, we obtain
	(after some simplifications) the equation
	\[
	\det_{0\le i,j<p}\left( (-1)^iH_{i+j}(0)-H_{i+j}\left( \frac{h}{\sqrt n} \right)e^{-h^2/n} \right)
	=\chi(n,h).
	\]
	If we specialise by setting $x_j=y_j=j$, then we see that 
	\[ m_{n,h}^{(p)}=n^{-\binom{p}{2}}\binom{2n}{n}^p
	\det_{0\le i,j<p}\left( (-1)^iH_{i+j}(0)-H_{i+j}\left( \frac{h}{\sqrt n} \right)e^{-h^2/n} \right)
	+O\left(n^{-1/2}e^{-h^2/n}  \right).
	\]
	Setting $h=t\sqrt n$ and replacing $m_{2n}^{(p)}$ with its asymptotic equivalent as given by
	Lemma~\ref{lem:total_num_asymptotics}, we obtain the result.
\end{proof}

\begin{figure}[width=8cm]
		\begin{center}
				\includegraphics{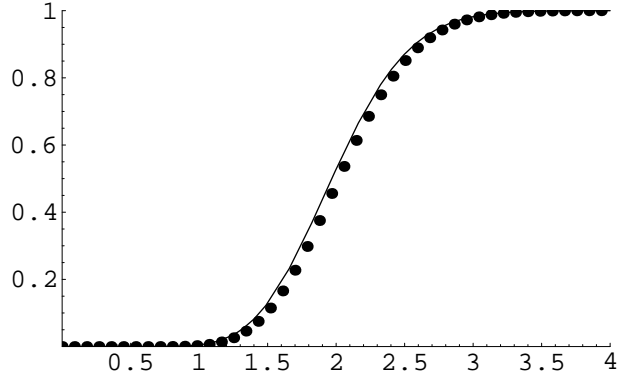}
		\end{center}
		\caption{Comparison of the c.d.f. of the random variable ``height'' on the set of $3-$watermelons of length $500$ 
		without wall (dotted curve) and the limiting distribution as given by Theorem~\ref{thm:height:weak_limit}.}
		\label{fig:cdf:height}
\end{figure}

Let us now turn our attention to the moments of the distribution of $H_{n,p}$. Clearly, we have for $s\in\N$,
\begin{equation}\label{eq:moments_exact}
	\E\left(H_{n,p}^{s}\right)=\sum_{h\ge 1}h^{s}\frac{m_{n,h+1}^{(p)}-m_{n,h}^{(p)}}{m_{n}^{(p)}}
=\sum_{h\ge 1}\left( h^s-(h-1)^s \right)\frac{m_{n}^{(p)}-m_{n,h}^{(p)}}{m_n^{(p)}}.
\end{equation}
The dominant terms of the asymptotics for the moments are going to be expressed by linear combinations of certain infinite exponential sums.
Asymptotics for these sums are to be determined now.

\begin{lemma}
		For $\nu\ge0$ and $\mu>0$ define
		\[ f_{\nu,\mu}(n)=\sum_{h\ge 1}h^\nu e^{-\mu h^2/n}. \]
		This sum admits the asymptotic series expansion
		\[
		f_{\nu,\mu}(n) \approx \frac{1}{2}\Gamma\left(\frac{ \nu+1}{2}\right)\left( \frac{n}{ \mu} \right)^{(\nu+1)/2}
		+\sum_{m\ge 0}\left( \frac{ \mu}{n} \right)^m \frac{(-1)^{\nu+m}B_{2m+\nu+1}}{(2m+\nu+1)!m!},
		\]
		as $n\to\infty$, where $\Gamma$ denotes the gamma function and $B_m$ is the $m$-th Bernoulli number defined via
		$\sum_{j\ge 0}B_jt^j/j!=t/(e^t-1)$.
		\label{lem:f_nu,mu}
\end{lemma}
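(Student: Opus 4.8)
The plan is to isolate the small parameter $t=\mu/n$ and to study $g(t)=\sum_{h\ge 1}h^\nu e^{-th^2}$ as $t\to 0^+$ by means of its Mellin transform. First I would compute, for $\Re(s)$ large enough, namely $\Re(s)>(\nu+1)/2$, the transform
\[
\widetilde g(s)=\int_0^\infty t^{s-1}g(t)\,dt=\sum_{h\ge 1}h^\nu\int_0^\infty t^{s-1}e^{-th^2}\,dt=\Gamma(s)\,\zeta(2s-\nu),
\]
where the interchange of sum and integral is justified by absolute convergence and the inner integral is evaluated by the substitution $u=th^2$. The inversion formula $g(t)=\frac{1}{2\pi i}\int_{(c)}\Gamma(s)\zeta(2s-\nu)t^{-s}\,ds$ with $c>(\nu+1)/2$ is then the starting point for the asymptotic analysis.

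Next I would push the contour to the left across the poles of the integrand and collect residues, which is the standard mechanism turning a Mellin representation into an asymptotic expansion. The rightmost singularity is the simple pole of $\zeta(2s-\nu)$ at $s=(\nu+1)/2$, whose residue (the factor $1/2$ coming from $\tfrac{d}{ds}(2s-\nu)=2$) contributes the leading term $\tfrac12\Gamma(\tfrac{\nu+1}{2})t^{-(\nu+1)/2}$. To the left of this point $\zeta$ is regular, and the only further poles are the simple poles of $\Gamma(s)$ at $s=-m$, $m\ge 0$, with residue $(-1)^m/m!$; each contributes $\frac{(-1)^m}{m!}\zeta(-2m-\nu)\,t^{m}$. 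Invoking the evaluation of the zeta function at non-positive integers, $\zeta(-k)=(-1)^kB_{k+1}/(k+1)$ (so that $\zeta(-2m-\nu)=(-1)^\nu B_{2m+\nu+1}/(2m+\nu+1)$, with the convention $B_1=-1/2$ fixed by the stated generating function), turns these residues into the announced Bernoulli-number coefficients carrying the sign $(-1)^{m+\nu}$. Substituting back $t=\mu/n$ then assembles the two pieces of the claimed expansion.

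The step I expect to be the main obstacle is making the contour shift rigorous and thereby proving that the result is a genuine \emph{asymptotic} expansion rather than a convergent one (the coefficients grow like the Bernoulli numbers, so the series diverges). I would truncate at the pole $s=-M$, move the contour to the vertical line $\Re(s)=-M-\tfrac12$, and bound the remaining integral: there $t^{-s}$ contributes a factor $t^{M+1/2}$, the Stirling estimate yields exponential decay of $|\Gamma(s)|$ in $|\Im(s)|$, and $\zeta(2s-\nu)$ grows at most polynomially on vertical lines by standard convexity bounds, so the integral converges and is $O(t^{M+1/2})$. This shows the right-hand side is an asymptotic series for $g(t)$ in the sense of the symbol $\approx$ as $t\to 0^+$, which after the substitution $t=\mu/n$ is precisely the regime $n\to\infty$. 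As an alternative derivation that produces the Bernoulli numbers directly, one may apply the Euler--Maclaurin summation formula to $\phi(x)=x^\nu e^{-\mu x^2/n}$, the integral term supplying the leading $\Gamma$-factor and the boundary contributions at $x=0$ supplying the correction series; the bookkeeping there is slightly more delicate because of the parity of $\nu$, which is why I favour the Mellin route.
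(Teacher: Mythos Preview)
Your proposal is correct and follows essentially the same Mellin-transform route as the paper: compute $\widetilde g(s)=\Gamma(s)\zeta(2s-\nu)$, invert, shift the contour left, and read off the leading term from the pole of $\zeta$ at $s=(\nu+1)/2$ and the lower-order terms from the poles of $\Gamma$ at the non-positive integers using $\zeta(-k)=(-1)^kB_{k+1}/(k+1)$. Your treatment is in fact more detailed than the paper's sketch, since you also justify the contour shift by bounding the remainder integral on $\Re(s)=-M-\tfrac12$ via Stirling decay of $\Gamma$ and polynomial growth of $\zeta$, thereby confirming the expansion is asymptotic rather than convergent.
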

\begin{proof}[Proof (Sketch)]
		Asymptotics for sums of this form can often be obtain by means of Mellin transform techniques. For a 
		detailed overview of Mellin transforms, harmonic sums and asymptotics, we refer to \cite{MR1337752}.

		We proceed with a sketch of the proof. The inverse Mellin transform gives
		\begin{align*}
				f_{\nu,\mu}(n)=\sum_{h\ge 1}h^\nu e^{-\mu h^2/n} =&
				\sum_{h\ge 1}\frac{h^\nu}{2\pi i}\int_{c-i\infty}^{c+i\infty}\Gamma(z)\left( \frac{\mu h^2}{n} \right)^{-z}dz \\
				=&\frac{1}{2\pi i}\int_{c-i\infty}^{c+i\infty}\Gamma(z)\left( \frac{\mu}{n} \right)^{-z}\zeta(2z-\nu)dz.
		\end{align*}
		The integrand has simple poles at $z=(\nu+1)/2$ and $z=0,-1,-2,\dots$ corresponding to the poles of the
		zeta and the gamma function, respectively.
		The result is now obtained by pushing the line of integration to the left and taking into account the residues.

		For the sake of convenience, we mention the evaluations
 		\begin{align*}
				\Res_{z=-m} \Gamma(z) &= \frac{(-1)^m}{m!}, \qquad m=0,1,2,\dots, \\
				\Res_{z=1} \zeta(z) &= 1 \\
				\zeta(-m) &= B_{m+1}\frac{(-1)^m}{m+1}, \qquad m=0,1,2,\dots, 
		\end{align*}
		where $B_m$ denotes the $m$-th Bernoulli number defined via  $\sum_{j\ge 0}B_j t^j/j!=t/(e^t-1)$.
\end{proof}

The rest of this section is devoted to the proof of Theorem~\ref{thm:height_moments} below, which gives the final 
expression for the asymptotics of the moments. In order to present the proof of this theorem in a clear fashion
we split it into a series of lemmas. For a more detailed overview of the proof, we refer directly to the proof of
Theorem~\ref{thm:height_moments}.

As a first step, we prove in Lemma~\ref{lem:height_moments} a preliminary asymptotic expression for the moments of
the height distribution. The presented compact form of the asymptotics makes use of certain linear operators
that are going to be defined now.
\begin{definition}\label{def:Xi10}
	Let $\Xi_1$ and $\Xi_0$ denote the linear operators 
	defined by
	\begin{align*}
			\Xi_1\left(h^{\nu}e^{-\mu h^2}\right) &= 
				\frac{1}{2}\Gamma\left(\frac{ \nu+1}{2}\right)\left( \frac{1}{ \mu} \right)^{(\nu+1)/2} \\
			\Xi_0\left(h^{\nu}e^{-\mu h^2}\right) &= 
			(-1)^\nu\frac{B_{\nu+1}}{(\nu+1)!},
	\end{align*}
	where $B_k$ denotes the $k$-th Bernoulli number.
\end{definition}
By Lemma~\ref{lem:f_nu,mu} we have
\[
	f_{\nu,\mu}(n) = \Xi_1\left( h^{\nu}e^{-\mu h^2} \right)n^{(\nu+1)/2}+\Xi_0\left(  h^{\nu}e^{-\mu h^2} \right)
	+O(n^{-1}),\qquad n\to\infty,
\]
so that $\Xi_1$ and $\Xi_0$ yield the coefficients of the first two terms in the asymptotic expansion of $f_{\nu,\mu}(n)$.

The preliminary expression for the asymptotics of the moments can now be proven in pretty much the same way as in
Theorem~\ref{thm:height:weak_limit}.
\begin{lemma}\label{lem:height_moments}
	For $s\in\N$, $s\ge 1$, the $s$-th moment of the random variable ``height'' satisfies the
	asymptotics
	\begin{equation}\label{eq:height_moments}
			\E\left( H_{n,p}^s\right)=
			s\Xi_1\left(\kappa_{p}h^{s-1}\right)n^{s/2}
			-\Xi_1\left(\binom{s}{2} \kappa_{p}h^{s-2}+\tau_{p}h^{s-1}\right)n^{(s-1)/2}
			+\Xi_0(\kappa_{p})
			+O\left( n^{s/2-1} \right)
	\end{equation}
	as $n\to\infty$, where
	\[
		\kappa_p=1-\frac{2^{-\binom{p}{2}}}{\prod\limits_{0\le j<p}j!}
		\det_{0\le i,j<p}\left( (-1)^iH_{i+j}(0)-H_{i+j}\left( h \right)e^{-h^2} \right)
	\]
	and
	\[
	\tau_p=(p-1)\frac{2^{-\binom{p}{2}}}{\prod\limits_{0\le j<p}j!}
				\det_{0\le i,j<p}\left(
			\begin{cases}
				(-1)^iH_{i+j}(0)-H_{i+j}\left(h\right)e^{-h^2} & \textrm{if $i<p-1$} \\
				(-1)^pH_{p+j}(0)-H_{p+j}\left(h\right)e^{-h^2} & \textrm{if $i=p-1$}
			\end{cases}
		\right).
	\]
Here, $H_k(z)$ denotes the $k$-th Hermite polynomial.
\end{lemma}
\begin{proof}
	Recall the exact expression for the $s$-th moment of the random variable ``height'' (see Equation~\eqref{eq:moments_exact}),
	\begin{equation}\label{eq:height_moments_exact_proof}
	\E\left( H_{n,p}^s \right)=\sum_{h=1}^{n+2p-2}\left( h^s-(h-1)^s \right)\frac{m_n^{(p)}-m_{n,h}^{(p)}}{m_n^{(p)}}.
	\end{equation}
	Asymptotics for this quantity can be obtained in pretty much the same way as Theorem~\ref{thm:height:weak_limit}.
	Compared to the problem of determining asymptotics for \eqref{eq:height_cdf}, the main difference 
	now is the summation over $h$.

	We consider the more general quantity
	\begin{align*}
		m_n^{(p)}(\vec x,\vec y)-m_{n,h}^{(p)}(\vec x,\vec y) =
		\det_{0\le i,j<p}\left( \binom{2n}{n+x_i-y_j}\right)
		-\det_{0\le i,j<p}\left( \binom{2n}{n+x_i-y_j}-\binom{2n}{n+h-x_i-y_j} \right),
	\end{align*}
	where $\vec x=(x_0,\dots,x_{p-1})$ and $\vec y=(y_0,\dots,y_{p-1})$.
	As a first step, we pull $\binom{2n}{n}$ out of each row of the determinants above.
	Now, we restrict the range of summation in \eqref{eq:height_moments_exact_proof}
	to $1\le h\le n^{1/2+\varepsilon}$ for some $\varepsilon>0$. This truncation is justified by
	Stirling's formula, which shows that
	\[
	\frac{\binom{2n}{n+\alpha}}{\binom{2n}{n}}=O\left( e^{-n^{2\varepsilon}} \right),\qquad n\to\infty,
	\]
	whenever $|\alpha|\ge n^{1/2+\varepsilon}$.
	This implies that the total contribution of all summands in \eqref{eq:height_moments_exact_proof} satisfying
	$h>n^{1/2+\varepsilon}$ is exponentially small as $n\to\infty$ and, therefore, negligible.
	In all the remaining summands we replace all the quotients of binomial coefficients with their
	asymptotic expansions as given in Lemma~\ref{lem:approx_binom_quot}.
	Finally, we re-extend the range of summation to $h\ge 1$, which, again, introduces an exponentially small
	error term.
	This gives the asymptotics
	\[
		\E\left( H_{n,p}^s \right)
		=\sum_{h\ge 1}\left(\left( h^s-(h-1)^s \right)
		\left(\frac{\binom{2n}{n}^p}{m_n^{(p)}}D_N(\vec e,\vec e)
		+O\left(e^{-h^2/n}n^{\binom{p}{2}-1-2N}\right)\right)
		\right), \qquad n\to\infty,
	\]
	where $\vec e=(0,1,\dots,p-1)$.
	Here, the structure of the error term is a consequence of Lemma~\ref{lem:total_num_asymptotics}, and
	the quantity $D_N(\vec x,\vec y)$ is defined by
	\begin{multline}\label{eq:D_N_shape}
	D_N(\vec x,\vec y)=
	\det_{0\le i,j<p}\left( 
	\sum_{u=0}^{4N+1}\left(
		\left(\frac{y_j-x_i}{\sqrt n}\right)^uT_{u;N}(0,n)\right)\right) \\
	-\det_{0\le i,j<p}\left( 
	\sum_{u=0}^{4N+1}\left(
		\left(\frac{y_j-x_i}{\sqrt n}\right)^uT_{u;N}(0,n)
		-\left(\frac{y_j+x_i}{\sqrt n}\right)^uT_{u;N}(h,n)
		\right)\right),
	\end{multline}
	where $N>0$ is an arbitrary integer and
	\[
	T_{u;N}(h,n)=e^{-h^2/n}\left( 
		\frac{H_u(h/\sqrt n)}{u!}+\sum_{l=1}^{3N+1}n^{-l}\sum_{k=0}^{u-1}
		\sum_{r=1}^{2l}F_{r,l}\binom{2r}{u-k}\frac{H_k\left( h/\sqrt n  \right)}{k!}
					\left(- \frac{h}{\sqrt n}  \right)^{2r+k-u}
	\right).
	\]
	
	As a consequence of Lemma~\ref{lem:f_nu,mu}, we see (after expanding the term $(h-1)^s$) that
	\[
	\sum_{h\ge 1}\left(\left( h^s-(h-1)^s \right)O\left(e^{-h^2/n}n^{\binom{p}{2}-1-2N}\right)\right)
	= O\left( n^{\binom{p}{2}-2N+(s-1)/2} \right),
	\]
	which is negligible for sufficiently large $N$.	Hence, we have the asymptotics
	\[
		\E\left( H_{n,p}^s \right)
		=\frac{\binom{2n}{n}^p}{m_n^{(p)}}\sum_{h\ge 1}\Big(\left( h^s-(h-1)^s \right)
		D_N(\vec e,\vec e)	\Big)
		+O\left(n^{\binom{p}{2}-2N+(s-1)/2}\right)	, \qquad n\to\infty.
	\]
	It remains to determine the part of $D_N(\vec x,\vec y)$ that gives the dominant contribution to
	the asymptotics above.
	First, we note that $D_N(\vec x, \vec y)$ is a polynomial in the $x_i$'s and $y_i$'s.
	Obviously, $D_N(\vec x,\vec y)$ is equal to zero whenever $x_i=x_j$ or $y_i=y_j$ for some $i\neq j$, for if
	$x_i=x_j$ ($y_i=y_j$) then the $i$-th and $j$-th rows (columns) of the determinants involved in the definition
	of $D_N(\vec x,\vec y)$ are equal, and, therefore, the determinants are equal to zero.
	This implies that $D_N(\vec x,\vec y)$ is of the form
	\[
	D_N(\vec x,\vec y)=
		n^{-\binom{p}{2}}
		\frac{\prod\limits_{0\le i<j<p}(x_j-x_i)(y_j-y_i)}{\prod\limits_{0\le j<p}j!^{2}}
		\left( \chi(n,h)+\sum_{j=0}^{p-1}\left( \xi_j(n,h)\frac{x_j}{\sqrt n}+\eta_j(n,h)\frac{y_j}{\sqrt n} \right)
		+O\left( n^{-1}e^{-h^2/n} \right) \right)
	\]
	as $n\to\infty$.
	By comparing coefficients of $\prod_{j=0}^{p-1}x_j^jy_j^j$ on both sides of the equation above, 
	we have already seen (see Theorem~\ref{thm:height:weak_limit}) that
	\[
	\chi(n,h)=
	\det_{0\le i,j<p}\left( (-1)^{i}H_{i+j}(0)\right)-
	\det_{0\le i,j<p}\left( (-1)^iH_{i+j}(0)-H_{i+j}\left( \frac{h}{\sqrt n} \right)e^{-h^2/n} \right)
	.
	\]
	Analogously we can determine $\xi_k(n,h)$.
	By comparing the coefficients of $x_k\prod_{j=0}^{p-1}x_j^jy_j^j$ on both sides of the
	equation above we obtain the equations
	\[ 0=\xi_{k}(n,h)-\xi_{k+1}(n,h),\qquad k<p-1, \]
	and
	\[
	\xi_{p-1}(n,h) =
		-\frac{1}{p}
		\det_{0\le i,j<p}\left(
			\begin{cases}
				(-1)^iH_{i+j}(0)-H_{i+j}\left( \frac{h}{\sqrt n} \right)e^{-h^2/n} & \textrm{if $i<p-1$} \\
				(-1)^pH_{p+j}(0)-H_{p+j}\left( \frac{h}{\sqrt n} \right)e^{-h^2/n} & \textrm{if $i=p-1$}
			\end{cases}
		\right).
	\]
	Note, that the coefficient of $x_k\prod_{j=0}^{p-1}x_j^jy_j^j$ in the first determinant of \eqref{eq:D_N_shape} is
	equal to zero, which is easily seen to be true for $k<p-1$, and for $k=p-1$ this is seen to be true
	by a series of column and row operations that yield a new matrix consisting of two non-square blocks.
	Similar expressions (with $i$ and $j$ interchanged) can be found for the $\eta_k(n,h)$, $0\le k<p$.
	
	Noting that $H_{i+j}(0)$ is non-zero if and only if $i+j$ is even we deduce that
	$(-1)^{i}H_{i+j}(0)=(-1)^jH_{i+j}(0)$, which implies
	\[ \xi_{p-1}(n,h)=\eta_{p-1}(n,h), \]
	and also
	\[
	\det_{0\le i,j<p}\left( (-1)^{i}H_{i+j}(0)\right) = \det_{0\le i,j<p}\left( (-1)^{(i+j)/2}H_{i+j}(0)\right)
	=2^{\binom{p}{2}}\prod_{j=0}^{p-1}j!.
	\]
	Here, the last equality has been proven in Lemma~\ref{lem:detEval}.

	If we specialise to $x_j=y_j=j$, $0\le j<p$, then we obtain
	\[
		D_N(\vec e,\vec e)=
		n^{-\binom{p}{2}}
		\left( \chi(n,h)+2\binom{p}{2}\xi_{p-1}(n,h)n^{-1/2} \right)+O\left(n^{-1} e^{-h^2/n} \right),
		\qquad n\to\infty,
	\]
	where $\vec e = (0,1,\dots,p-1)$.

	Choosing $N$ large enough and expanding the term $h^s-(h-1)^s$ in the asymptotics for 
	$\E\left( H_{n,p}^s \right)$ above, we obtain with the help of Lemma~\ref{lem:f_nu,mu}
	the asymptotics
	\[
	\E\left( H_{n,p}^s \right)=
	\frac{\binom{2n}{n}^p}{m_n^{(p)}}
	\sum_{h\ge 1}\left( sh^{s-1}-\binom{s}{2}h^{s-2} \right)D_N(\vec e,\vec e)
	+O\left( n^{s/2-1} \right),
	\qquad n\to\infty,
	\]
	and replacing $D_N(\vec e,\vec e)$ with its asymptotic expansion as given above proves the lemma.
\end{proof}

\begin{lemma}\label{lem:detEval}
Let $H_k(x)$ denote the $k$-th Hermite polynomial as defined by Equation~\eqref{eq:hermitepolynomial}.
We have the determinant evaluation
\begin{equation}
\det_{0\le i,j<p}\left( (-1)^{(i+j)/2}H_{i+j}(0) \right)=
2^{\binom{p}{2}}\prod_{j=0}^{p-1}j!.
\end{equation}
\end{lemma}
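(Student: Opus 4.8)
My plan is to exploit the Hankel structure of the matrix via the moment / orthogonal-polynomial interpretation of Hankel determinants.

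First I would evaluate $H_k(0)$ directly from the series \eqref{eq:hermitepolynomial}. Setting $z=0$ annihilates every summand except the one with $2m-k=0$, so $H_k(0)=0$ for odd $k$ while $H_{2m}(0)=(-1)^m(2m)!/m!$ for even $k=2m$. Consequently the $(i,j)$ entry simplifies to $(-1)^{(i+j)/2}H_{i+j}(0)=(i+j)!/\bigl((i+j)/2\bigr)!$ when $i+j$ is even and vanishes otherwise; in particular it depends only on $i+j$. Writing $c_n$ for this common value, the matrix $\bigl(c_{i+j}\bigr)_{0\le i,j<p}$ is therefore a Hankel matrix.

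Second, I would recognise $c_n$ as the $n$-th moment of the centred Gaussian measure $d\mu(x)=\tfrac{1}{2\sqrt\pi}e^{-x^2/4}\,dx$ of variance $2$: its odd moments vanish and $\int_{\R}x^{2m}\,d\mu(x)=2^m(2m-1)!!=(2m)!/m!=c_{2m}$. Hence $\bigl(c_{i+j}\bigr)$ is precisely the Gram matrix of the monomials $1,x,\dots,x^{p-1}$ in $L^2(\mu)$. Expanding these monomials in the basis of monic orthogonal polynomials $\pi_0,\dots,\pi_{p-1}$ of $\mu$ through a unit lower-triangular matrix $C$ yields the factorisation $\bigl(c_{i+j}\bigr)=C\,\mathrm{diag}(h_0,\dots,h_{p-1})\,C^{T}$, where $h_l=\langle\pi_l,\pi_l\rangle_\mu$. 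Since $\det C=1$, the determinant equals $\prod_{l=0}^{p-1}h_l$.

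Third, the monic orthogonal polynomials of $\mu$ are rescaled Hermite polynomials, and their three-term recurrence has coefficient $\beta_l=2l$; together with $h_0=\mu_0=1$ and the standard identity $h_l=h_0\prod_{k=1}^{l}\beta_k$ this gives $h_l=2^l l!$. Multiplying, $\prod_{l=0}^{p-1}2^l l!=2^{\binom{p}{2}}\prod_{l=0}^{p-1}l!$, which is the claim. I expect the only delicate point to be fixing the normalisation correctly — choosing the Gaussian of variance $2$ (rather than $1$) so that the moments match $c_n$ exactly, and reading off $h_l=2^l l!$ from the recurrence; a scaling slip there is the easiest way to land on a wrong power of $2$. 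As an entirely elementary alternative, the same identity $\det=\prod_l h_l$ can be obtained by exhibiting the $LDL^{T}$ factorisation explicitly from the known connection coefficients of the Hermite polynomials, bypassing the measure-theoretic language at the cost of a short computation.
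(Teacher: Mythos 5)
Your proof is correct and follows essentially the same route as the paper: both identify the entries as Gaussian moments and evaluate the resulting Hankel determinant via the squared norms of the monic (rescaled Hermite) orthogonal polynomials, using the three-term recurrence. The only cosmetic difference is one of normalisation --- the paper first extracts the factor $2^{\binom{p}{2}}$ and works with the variance-one Gaussian $w(x)=\frac{1}{\sqrt{2\pi}}e^{-x^2/2}$, whereas you keep the variance-two Gaussian and recover that factor from the norms $h_l=2^l\,l!$; both bookkeepings check out.
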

\begin{proof}	
	The determinant under consideration is a Hankel determinant. Therefore, we can hope to evaluate it
	with the help of orthogonal polynomials (for details see~\cite[Section 2.7]{MR1701596}).
	It is well known (see, e.g., \cite[page 105]{MR0106295}) that for $k\in\N$ we have
	\[ H_{2k+1}(0)=0 \qquad\textrm{and}\qquad H_{2k}(0)=(-1)^k\frac{(2k)!}{k!}. \]
	Consequently, we obtain
	\[
	\det_{0\le i,j<p}\left( (-1)^{(i+j)/2}H_{i+j}(0) \right)=
	2^{\binom{p}{2}}\det_{0\le i,j<p}\left( \frac{1+(-1)^{i+j}}{2}\frac{2^{(i+j)/2}}{\sqrt\pi}\Gamma\left(\frac{i+j+1}{2} \right) \right).
	\]

	The $(i,j)$-th entry of the determinant on the right hand side above is seen to be precisely the $(i+j)$-th moment with
	respect to the Gaussian weight $w(x)=\frac{1}{\sqrt{2\pi}}e^{-x^2/2}$ on $\R$, that is,
	\[
	\frac{1}{\sqrt{2\pi}}\int_{-\infty}^{\infty}x^ke^{-x^2/2}dx = 
	\frac{1+(-1)^{k}}{2}\frac{2^{k/2}}{\sqrt\pi}\Gamma\left(\frac{k+1}{2} \right),\qquad k=0,1,2\dots
	\]

	The family of monic orthogonal polynomials associated with the weight $w(x)$ is given by
	\begin{equation}\label{eq:orth_poly_h}
			2^{-n/2}H_n\left( \frac{x}{\sqrt{2}} \right),\qquad n=0,1,2,\dots
	\end{equation}
	where $H_n(x)$ denotes the $n$-th Hermite polynomial as defined by Equation~\eqref{eq:hermitepolynomial}.
	The three term recursion relation for the orthogonal polynomials~(\ref{eq:orth_poly_h}) is seen to be (cf. \cite[p.105]{MR0106295})
	\[ 2^{-(n+1)/2}H_{n+1}\left( \frac{x}{\sqrt 2} \right)
	=x2^{-n/2}H_{n}\left( \frac{x}{\sqrt 2} \right)-n2^{-(n-1)/2}H_{n-1}\left( \frac{x}{\sqrt 2} \right),
	\qquad n=1,2,\dots, \]
	with the initial values $H_0\left(\frac{x}{\sqrt 2} \right)=1$ and $2^{-1/2}H_1\left(\frac{x}{\sqrt 2} \right)=x$.
	Now, an application of \cite[Theorem 11]{MR1701596}) shows that
%
	\[
	\det_{0\le i,j<p}\left( \frac{1+(-1)^{i+j}}{2}\frac{2^{(i+j)/2}}{\sqrt\pi}\Gamma\left(\frac{i+j+1}{2} \right) \right)
	=\prod_{j=0}^{p-1}j!,
	\]
	which proves the claim.
\end{proof}
\begin{lemma}\label{lem:Xi1_property}
Let $\mu>0$ denote a real number.
The operator $\Xi_1$ from Definition~\ref{def:Xi10} satisfies the relation
\begin{equation}\label{eq:Xi1_relation}
\Xi_1\left( \frac{d}{dh}\left( h^\nu e^{-\mu h^2} \right) \right)=
\begin{cases}
	-1 & \textrm{if $\nu=0$} \\
	0 & \textrm{if $\nu>0$}.
\end{cases}
\end{equation}
\end{lemma}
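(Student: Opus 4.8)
The plan is to reduce the claim to a short computation combining the linearity of $\Xi_1$, its explicit definition from Definition~\ref{def:Xi10}, and the functional equation $\Gamma(z+1)=z\Gamma(z)$ of the gamma function. First I would carry out the differentiation by the product rule, writing
\[
\frac{d}{dh}\left( h^\nu e^{-\mu h^2} \right)=\nu h^{\nu-1}e^{-\mu h^2}-2\mu h^{\nu+1}e^{-\mu h^2}.
\]
Since $\Xi_1$ is linear and evaluates each monomial-times-Gaussian term according to its definition, applying it termwise yields
\[
\Xi_1\left( \frac{d}{dh}\left( h^\nu e^{-\mu h^2} \right) \right)
=\nu\,\Xi_1\left( h^{\nu-1}e^{-\mu h^2} \right)-2\mu\,\Xi_1\left( h^{\nu+1}e^{-\mu h^2} \right).
\]

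For $\nu>0$ both evaluations are legitimate, and inserting the definition rewrites the right-hand side as a difference of two gamma values at half-integer arguments times powers of $\mu$. The key step is then to invoke $\Gamma\left(\tfrac{\nu+2}{2}\right)=\tfrac{\nu}{2}\Gamma\left(\tfrac{\nu}{2}\right)$, which is precisely the factor needed to make the two terms match: after simplifying the powers of $\mu$, one finds that both $\nu\,\Xi_1\left( h^{\nu-1}e^{-\mu h^2} \right)$ and $2\mu\,\Xi_1\left( h^{\nu+1}e^{-\mu h^2} \right)$ equal $\tfrac{\nu}{2}\Gamma\left(\tfrac{\nu}{2}\right)\mu^{-\nu/2}$, so their difference vanishes. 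This settles the case $\nu>0$.

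For $\nu=0$ the first summand is annihilated by its prefactor $\nu=0$, so only $-2\mu\,\Xi_1\left( h e^{-\mu h^2} \right)$ survives; evaluating this with $\Gamma(1)=1$ gives $-2\mu\cdot\tfrac12\cdot\mu^{-1}=-1$, as asserted. I do not expect a genuine obstacle here, since everything is elementary once the derivative is expanded. The only point requiring slight care is the $\nu=0$ case, where the formal term $\nu h^{\nu-1}e^{-\mu h^2}$ contains a negative power of $h$; it is killed by its coefficient $\nu$ and should simply be dropped rather than fed into $\Xi_1$, so no ambiguity actually arises.
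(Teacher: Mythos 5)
Your proof is correct and follows essentially the same route as the paper: expand the derivative by the product rule, apply $\Xi_1$ termwise by linearity, and use $\Gamma\left(\tfrac{\nu+2}{2}\right)=\tfrac{\nu}{2}\Gamma\left(\tfrac{\nu}{2}\right)$ to see that the two contributions cancel for $\nu>0$, with the $\nu=0$ case being a direct evaluation. Your remark about dropping the formally singular term $\nu h^{\nu-1}e^{-\mu h^2}$ when $\nu=0$ is a sensible extra precaution, but there is no substantive difference from the paper's argument.
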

\begin{proof}
	For $\nu=0$ the claim follows immediately from the definition of the operator $\Xi_1$.
	For $\nu>0$ we calculate
	\[
		\Xi_1\left( h^{\nu+1}e^{-\mu h^2} \right)=\frac{\nu}{2\mu}\Xi_1\left( h^{\nu-1}e^{-\mu h^2} \right),
	\]
	from which the claims follows upon multiplying by $2\mu$ and rearranging the terms.
\end{proof}

The next result is not obvious at all, and, on the contrary, is a quite surprising fact.
\begin{lemma}\label{lem:kappa_tau_relation}
Let $\kappa_p$ and $\tau_p$ denote the determinants defined in Lemma~\ref{lem:height_moments}.
We have the relation
\begin{equation}\label{eq:kappa_tau_relation}
(p-1)\frac{d}{dh}\kappa_p=\tau_p,\qquad p\ge 1.
\end{equation}
\end{lemma}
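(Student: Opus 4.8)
The plan is to reduce the claimed identity to one clean statement about the derivative of the determinant defining $\kappa_p$, and then to prove that statement by a Wronskian‑type differentiation combined with a symmetrisation trick. Throughout I write $a_{i,j}(h)=(-1)^iH_{i+j}(0)-H_{i+j}(h)e^{-h^2}$, let $A=A(h)=\left(a_{i,j}\right)_{0\le i,j<p}$, and abbreviate $c_{i,j}:=(-1)^iH_{i+j}(0)$. Two elementary observations drive everything. First, since $H_{i+j}(0)$ vanishes whenever $i+j$ is odd and $(-1)^i=(-1)^j$ whenever $i+j$ is even, the constant part $c_{i,j}$ is symmetric in $(i,j)$; as $H_{i+j}(h)e^{-h^2}$ is manifestly symmetric, the matrix $A$ is \emph{symmetric}. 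Second, from $H_n'(x)=2nH_{n-1}(x)$ and the three–term recurrence $H_{n+1}(x)=2xH_n(x)-2nH_{n-1}(x)$ one obtains the Rodrigues‑type identity $\frac{d}{dh}\left(H_n(h)e^{-h^2}\right)=-H_{n+1}(h)e^{-h^2}$, so that $\frac{d}{dh}a_{i,j}=H_{i+j+1}(h)e^{-h^2}=c_{i+1,j}-a_{i+1,j}$, where $a_{i+1,j}$ is the natural extension of the entry to row index $i+1$.

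Using this, I would differentiate $\det A$ row by row, and split each differentiated $k$-th row by multilinearity into its constant part $\left(c_{k+1,j}\right)_j$ and its $-\left(a_{k+1,j}\right)_j$ part. The second part duplicates the $(k+1)$-st row for every $k<p-1$ and hence contributes $0$, while for $k=p-1$ it reproduces exactly the determinant appearing in $\tau_p$ (its last row is precisely $\left(a_{p,j}\right)_j$); call that determinant $D$. This gives
\[
	\frac{d}{dh}\det A=\sum_{k=0}^{p-1}\det\left(M_k\right)-D,
\]
where $M_k$ is $A$ with its $k$-th row replaced by the constant vector $\left(c_{k+1,j}\right)_j$. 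Since $\frac{d}{dh}\kappa_p=-\frac{2^{-\binom p2}}{\prod_{0\le j<p}j!}\frac{d}{dh}\det A$ and $\tau_p=(p-1)\frac{2^{-\binom p2}}{\prod_{0\le j<p}j!}D$, the asserted relation $(p-1)\frac{d}{dh}\kappa_p=\tau_p$ is equivalent, after cancelling the factor $(p-1)$ (the case $p=1$ being trivial since both sides vanish), to $\frac{d}{dh}\det A=-D$, that is, to the single identity $\sum_{k=0}^{p-1}\det(M_k)=0$.

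Proving $\sum_{k}\det(M_k)=0$ is the heart of the matter and, I expect, the main obstacle. The plan is to expand each $\det(M_k)$ along its one constant row by Laplace's rule, obtaining $\det(M_k)=\sum_{j}(-1)^{k+j}c_{k+1,j}A_{\hat k,\hat j}$, where $A_{\hat k,\hat j}$ denotes the minor of $A$ with row $k$ and column $j$ deleted. Summing over $k$ produces the double sum $S=\sum_{k,j}(-1)^{k+j}c_{k+1,j}A_{\hat k,\hat j}$. Here the symmetry of $A$ pays off: it forces $A_{\hat k,\hat j}=A_{\hat j,\hat k}$, so relabelling the summation indices shows that $S$ also equals $\sum_{k,j}(-1)^{k+j}c_{j+1,k}A_{\hat k,\hat j}$. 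Adding the two expressions yields $2S=\sum_{k,j}(-1)^{k+j}\left(c_{k+1,j}+c_{j+1,k}\right)A_{\hat k,\hat j}$, and the decisive point is that $c_{k+1,j}+c_{j+1,k}=H_{k+j+1}(0)\left((-1)^{k+1}+(-1)^{j+1}\right)$ vanishes identically: if $k+j+1$ is odd then $H_{k+j+1}(0)=0$, while if $k+j+1$ is even then $k$ and $j$ have opposite parity and the bracket $(-1)^{k+1}+(-1)^{j+1}$ cancels. Hence $S=0$, which gives $\frac{d}{dh}\det A=-D$ and, after reinstating the constants as above, the relation $(p-1)\frac{d}{dh}\kappa_p=\tau_p$.
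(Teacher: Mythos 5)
Your proof is correct and follows essentially the same route as the paper: differentiate the determinant row by row, split each differentiated row (via $\frac{d}{dh}\bigl(H_n(h)e^{-h^2}\bigr)=-H_{n+1}(h)e^{-h^2}$) into a constant row plus a copy of the next row of the matrix, and then kill the sum of ``constant-row'' determinants by Laplace expansion combined with the symmetry of the matrix and the vanishing of $H_n(0)$ for odd $n$. The only cosmetic difference is that you run the antisymmetry cancellation over the full index range $0\le j,k\le p-1$, reducing everything to the single identity $\sum_{k}\det(M_k)=0$, whereas the paper cancels only the block $j,k\le p-2$ and identifies the leftover $k=p-1$ terms with the Laplace expansion of the determinant appearing in $\tau_p$.
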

\begin{proof}
For the sake of convenience we set
\[ C=2^{-\binom{p}{2}}\left( \prod_{j=0}^{p-1}j! \right)^{-1}. \]
The derivative of a $p\times p$ determinant is the sum $p$ determinants, where the $j$-th addend is equal to 
the original determinant with the $j$-th row replaced by its derivative. Hence,
\[
\frac{d}{dh}\kappa_p = C\left( \sum_{j=0}^{p-2}M_j \right)+CM_{p-1},
\]
where
\[
M_i=
\det\left(\ 
\begin{pmatrix}
	\H_{0,0} & \cdots & \H_{0,p-1} \\ \vdots & \ddots & \vdots \\ \H_{i-1,0} & \cdots & \H_{i-1,p-1} \\
	-H_{i+1}(h)e^{-h^2} & \cdots & -H_{i+p}(h)e^{-h^2} \\
	\H_{i+1,0} & \cdots & \H_{i+1,p-1} \\ \vdots & \ddots & \vdots \\ \H_{p-1,0} & \cdots & \H_{p-1,p-1}
\end{pmatrix}
\ \right)
,
\qquad
\H_{i,j}=(-1)^iH_{i+j}(0)-H_{i+j}(h)e^{-h^2}.
\]
%
We want to mention that $(p-1)CM_{p-1}$ is equal to the expression for $\tau_p$ except for the constant terms
in the last row of the determinant.

For $0\le i<p-1$ the quantity $M_i$ can also be represented by the expression
\[
M_i=
\det\left(\ 
\begin{pmatrix}
	\H_{0,0} & \cdots & \H_{0,p-1} \\ \vdots & \ddots & \vdots \\ \H_{i-1,0} & \cdots & \H_{i-1,p-1} \\
	\H_{i+1}& \cdots & \H_{i+p} \\
	(-1)^{i+1}H_{i+1}(0) & \cdots & (-1)^{i+1}H_{i+p}(0) \\
	\H_{i+2,0} & \cdots & \H_{i+2,p-1} \\ \vdots & \ddots & \vdots \\ \H_{p-1,0} & \cdots & \H_{p-1,p-1}
\end{pmatrix}
\ \right),
\qquad 0\le i<p-1,
\]
which is more convenient to work with.

The Laplace expansion for determinants with respect to the row $j+1$, $0\le j<p-1$, gives
\[ M_j = \sum_{k=0}^{p-1}(-1)^{j+1}H_{j+1+k}(0)M_{j,k}, \qquad 0\le j<p-1, \]
where $M_{j,k}$ denotes the minor of $M_j$ obtained by removing row $j+1$ and column $k$, i.e.,
\[
M_{j,k}= \det
\left(\ 
\begin{pmatrix}
\H_{0,0} & \cdots & \H_{0,k-1} & \H_{0,k+1} & \cdots & \H_{0,p-1} \\
\vdots & \ddots & \vdots & \vdots & \ddots & \vdots \\
\H_{i-1,0} & \cdots & \H_{i-1,k-1} & \H_{i-1,k+1} & \cdots & \H_{i-1,p-1} \\
\H_{i+1,0} & \cdots & \H_{i+1,k-1} & \H_{i+1,k+1} & \cdots & \H_{i+1,p-1} \\
\H_{i+2,0} & \cdots & \H_{i+2,k-1} & \H_{i+2,k+1} & \cdots & \H_{i+2,p-1} \\
\vdots & \ddots & \vdots & \vdots & \ddots & \vdots \\
\H_{p-1,0} & \cdots & \H_{p-1,k-1} & \H_{p-1,k+1} & \cdots & \H_{p-1,p-1}
\end{pmatrix}
\ \right).
\]

Now, consider the sum
\[
\sum_{j=0}^{p-2}M_j = 
\left(\sum_{j=0}^{p-2}\sum_{k=0}^{p-2}(-1)^{j+1}H_{j+1+k}(0)M_{j,k}\right)
+\sum_{j=0}^{p-2}(-1)^{j+1}H_{j+p}(0)M_{j,p-1}.
\]
The first sum on the right hand side in fact is equal to zero as is going to be shown now.
First, note that
\[ M_{j,k} = M_{k,j} \]
since the matrices involved are transposes of each other.
Recalling that $H_k(0)$ is non zero if and only if $k$ is an even number we deduce that
\[
(-1)^{j+1}H_{j+1+k}(0)M_{j,k}=-(-1)^{k+1}H_{k+1+j}(0)M_{k,j},
\]
and both expressions correspond to different addends of the double sum above ($j+1+k$ has to be even). 
This shows that the value of the double sum is indeed equal to zero.

For the second sum we have
\begin{align*}
\sum_{j=0}^{p-2}(-1)^{j+1}H_{j+p}(0)M_{j,p-1}
&= -\sum_{j=0}^{p-2}(-1)^{p}H_{j+p}(0)M_{p-1,j} \\
&= \det_{0\le k,l<p}\left( 
\begin{cases}
(-1)^kH_{k+l}(0)-H_{k+l}(h)e^{-h^2} & \textrm{if $k < p-1$} \\
(-1)^pH_{p+l} & \textrm{if $k=p-1$}
\end{cases}
\right),
\end{align*}
which proves the lemma.
\end{proof}

We are now able to to state and prove the final expression for the asymptotics of the moments.
\begin{theorem}\label{thm:height_moments}
	The expected value of the random variable $H_{n,p}$ satisfies the asymptotics
	\begin{equation}\label{eq:height_moments_expectedvalue}
		\E\left( H_{n,p}\right) =  \Xi_1\left( \kappa_p \right)\sqrt{n}+p-\frac{3}{2}+O\left( n^{-1/2} \right),
		\qquad n\to\infty,		
	\end{equation}
	and for $s\in\N$, $s\ge 2$, we have the asymptotics
	\begin{equation}\label{eq:height_moments_higher}
		\E\left( H_{n,p}^s\right) = s\Xi_1(\kappa_ph^{s-1})n^{s/2}
		+(s-1)\left( p-1-\frac{s}{2} \right)\Xi_1\left( \kappa_ph^{s-2} \right)n^{(s-1)/2}
		+O\left( n^{s/2-1} \right),
		\qquad n\to\infty.		
	\end{equation}
	Here, $\kappa_p$ is defined by
	\[
	\kappa_p=1-\frac{2^{-\binom{p}{2}}}{\prod\limits_{0\le j<p}j!}
	\det_{0\le i,j<p}\left( (-1)^iH_{i+j}(0)-H_{i+j}\left( h \right)e^{-h^2} \right),
	\]
	where $H_k(z)$ denotes the $k$-th Hermite polynomial.
\end{theorem}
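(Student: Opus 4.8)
The plan is to treat Lemma~\ref{lem:height_moments} as the starting point: its leading term $s\Xi_1(\kappa_ph^{s-1})n^{s/2}$ already matches the claim, so the entire content of Theorem~\ref{thm:height_moments} is the simplification of the coefficient of $n^{(s-1)/2}$ (and, for $s=1$, the incorporation of the surviving $\Xi_0(\kappa_p)$ term). The one structural fact I would establish first is that $\kappa_p$ has \emph{no constant term}: expanding the determinant defining $\kappa_p$, the only purely constant contribution arises from selecting the summand $(-1)^iH_{i+j}(0)$ in every entry, and by Lemma~\ref{lem:detEval} this contributes $2^{\binom p2}\prod_{0\le j<p}j!$, which exactly cancels the leading $1$. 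Consequently, writing $\kappa_p$ as a linear combination of monomials $h^\nu e^{-\mu h^2}$, every such monomial has $\mu\ge 1$.

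For $s\ge 2$ I would use an ``integration by parts'' identity for $\Xi_1$. Since $s-1\ge 1$, every monomial of $h^{s-1}\kappa_p$ has the form $h^\nu e^{-\mu h^2}$ with $\nu\ge 1$ and $\mu>0$, so Lemma~\ref{lem:Xi1_property} gives $\Xi_1\!\big(\tfrac{d}{dh}(h^{s-1}\kappa_p)\big)=0$. Expanding the derivative yields
\[
\Xi_1\!\big(h^{s-1}\kappa_p'\big)=-(s-1)\,\Xi_1\!\big(h^{s-2}\kappa_p\big),
\]
and combining this with Lemma~\ref{lem:kappa_tau_relation} in the form $\tau_p=(p-1)\kappa_p'$ gives $\Xi_1(\tau_ph^{s-1})=-(p-1)(s-1)\Xi_1(\kappa_ph^{s-2})$. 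Substituting into the coefficient of $n^{(s-1)/2}$ from Lemma~\ref{lem:height_moments},
\[
-\Xi_1\!\Big(\binom{s}{2}\kappa_ph^{s-2}+\tau_ph^{s-1}\Big)
=\Big[(p-1)(s-1)-\binom{s}{2}\Big]\Xi_1(\kappa_ph^{s-2})
=(s-1)\Big(p-1-\tfrac{s}{2}\Big)\Xi_1(\kappa_ph^{s-2}),
\]
which is exactly \eqref{eq:height_moments_higher}; here $\Xi_0(\kappa_p)=O(1)$ is absorbed into $O(n^{s/2-1})$ because $s\ge 2$.

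The case $s=1$ needs two exact constant evaluations, since now $\binom{1}{2}=0$ removes one term but $\Xi_0(\kappa_p)$ survives. For the first, I would compute $\Xi_1(\tau_p)=(p-1)\Xi_1(\kappa_p')$; since $\Xi_1\!\big(\tfrac{d}{dh}(h^\nu e^{-\mu h^2})\big)$ equals $-1$ when $\nu=0$ and $0$ otherwise, $\Xi_1(\kappa_p')$ is minus the sum of the coefficients of the $\nu=0$ monomials of $\kappa_p$, i.e.\ $-\kappa_p(0)$; and $\kappa_p(0)=1$ because at $h=0$ every even-indexed row of the determinant vanishes (its $(i,j)$ entry is $((-1)^i-1)H_{i+j}(0)$). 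For the second, note that $\Xi_0$ does not depend on $\mu$, so $\Xi_0(\kappa_p)$ only sees the polynomial $\widetilde\kappa_p(h)$ obtained by setting $e^{-h^2}\mapsto 1$; conjugating the underlying determinant by $\mathrm{diag}((-1)^i)$ and using $(-1)^iH_{i+j}(0)=(-1)^jH_{i+j}(0)$ shows it is invariant under $h\mapsto -h$, so $\widetilde\kappa_p$ is even. Hence only $\nu=0$ contributes (odd $\nu$ are absent, and even $\nu\ge 2$ carry $B_{\nu+1}=0$), giving $\Xi_0(\kappa_p)=\widetilde\kappa_p(0)B_1=-\tfrac12$. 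Together, $-\Xi_1(\tau_p)+\Xi_0(\kappa_p)=(p-1)-\tfrac12=p-\tfrac32$, which is \eqref{eq:height_moments_expectedvalue}.

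I expect the main obstacle to be precisely these two exact evaluations in the $s=1$ case, i.e.\ pinning down the constant $p-\tfrac32$ rather than merely $O(1)$. The evenness of $\widetilde\kappa_p$ (which is what forces all odd-degree terms — the only ones that could contribute through nonzero Bernoulli numbers $B_{2k}$ — to drop out) and the vanishing-row identity $\kappa_p(0)=1$ are the two non-routine ingredients; everything else is bookkeeping of powers of $n$ and linearity of $\Xi_0$ and $\Xi_1$.
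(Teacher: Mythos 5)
Your proposal is correct and follows essentially the same route as the paper: it starts from Lemma~\ref{lem:height_moments}, uses the fact that $\kappa_p$ has no exponential-free term together with Lemmas~\ref{lem:Xi1_property} and \ref{lem:kappa_tau_relation} to turn the $n^{(s-1)/2}$ coefficient into $(s-1)\left(p-1-\tfrac{s}{2}\right)\Xi_1(\kappa_p h^{s-2})$ for $s\ge 2$, and reduces the $s=1$ case to the two exact evaluations $\Xi_1(\tau_p)=1-p$ and $\Xi_0(\kappa_p)=-\tfrac12$. The only (harmless) deviation is in how those two constants are obtained: you read them off from $\kappa_p(0)=1$ (the $h=0$ determinant has a vanishing row) and the evenness of the $\mu$-collapsed polynomial, whereas the paper evaluates the $h$-degree-zero part $\chi(h)$ in the closed product form $\tfrac{1}{C}\left(1-e^{-2h^2}\right)^{\floor{p/2}}\left(1-e^{-h^2}\right)^{\ceil{p/2}-\floor{p/2}}$ via Lemma~\ref{lem:detEval}; both give the same numbers.
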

\begin{proof}
	As a first step we need to establish some simple facts concerning the quantity $\kappa_p$.
	To be more precise, we have to show that $\kappa_p$ is an even function with respect to $h$ that has
	no constant term, i.e., is of the form
	\[ \kappa_p = \sum_{k=0}^{K}\sum_{m=1}^{M}\lambda_{k,m}h^{2k}e^{-mh^2} \]
	for some numbers $K$, $M$ and some constants $\lambda_{k,m}$.

	It is obvious from the definition of the Hermite polynomials (see Equation~\eqref{eq:hermitepolynomial})
	that the $k$-th Hermite polynomial is an even (odd) polynomial whenever $k$ is even (odd).
	This also implies the equality $(-1)^iH_{i+j}(0)=(-1)^jH_{i+j}(0)$.
	Now, replacing $h$ by $-h$ in the definition of $\kappa_p$, factoring $(-1)^i$ out of the $i$-th row and
	$(-1)^j$ out of the $j$-th row we see that the expression remains unaltered. Hence, $\kappa_p$ is an even function of $h$.
	The constant term of $\kappa_p$ is seen to be equal to
	\[
	1-\frac{2^{-\binom{p}{2}}}{\prod_{j=0}^{p-1}j!}\det_{0\le i,j<p}\left( (-1)^iH_{i+j}(0) \right)
	=1-\frac{2^{-\binom{p}{2}}}{\prod_{j=0}^{p-1}j!}\det_{0\le i,j<p}\left( (-1)^{(i+j)/2}H_{i+j}(0) \right)=0,
	\]
	where the last equality is a consequence of Lemma~\ref{lem:detEval}.
	This proves the claimed form of $\kappa_p$.

	We are now going to prove the asymptotics~(\ref{eq:height_moments_higher}). Therefore, we assume that $s>1$.
	The properties of $\kappa_p$ established above together with Lemma~\ref{lem:Xi1_property} imply the equation
	\[ \Xi_1\left( \frac{d}{dh}\left( \kappa_ph^{s-1} \right) \right)=0, \]
	and the product rule for the derivative together with Lemma~\ref{lem:kappa_tau_relation} show that
	\[ \Xi_1\left( \tau_ph^{s-1} \right)=-(s-1)(p-1)\Xi_1\left( \kappa_ph^{s-2} \right). \]
	The asymptotics~(\ref{eq:height_moments_higher}) is now obtained from the asymptotics~(\ref{eq:height_moments})
	upon noting that the $\Xi_0$-term is negligible for $s\ge2$.

	Finally, we prove the asymptotics~(\ref{eq:height_moments_expectedvalue}) and, therefore, assume $s=1$.
	For the sake of simplicity we set
	\[ C = 2^{-\binom{p}{2}}\left( \prod_{j=0}^{p-1}j! \right)^{-1}. \]
	From Lemma~\ref{lem:kappa_tau_relation} and Lemma~\ref{lem:Xi1_property} we deduce that
	\[
	\Xi_1\left( \tau_p \right)=(p-1)\Xi_1\left( \frac{d}{dh}\kappa_p \right)
	=-(p-1)\Xi_1\left( C\frac{d}{dh}\chi(h) \right),
	\]
	where
	\[ \chi(h)=\det_{0\le i,j<p}\left( (-1)^iH_{i+j}(0)-H_{i+j}(0)e^{-h^2} \right). \]
	This last determinant can be evaluated to a closed form expression with the help of Lemma~\ref{lem:detEval}.
	Factoring $1-(-1)^je^{-h^2}$ out of each column of the determinant we see that
	\begin{align*}
		\chi(h)
		&=\left( \prod_{j=0}^{p-1}\left( 1-(-1)^je^{-h^2} \right) \right)\det_{0\le i,j<p}\left( (-1)^{(i+j)/2}H_{i+j}(0) \right) \\
		&=\frac{1}{C}\left(1-e^{-2h^2}\right)^{\floor{p/2}}\left(1-e^{-h^2}\right)^{\ceil{p/2} -\floor{p/2}}.
	\end{align*}
	Now, an application of Lemma~\ref{lem:Xi1_property} shows that
	\[ \Xi_1\left( \frac{d}{dh}\chi(h) \right)=-1, \]
	which implies
	\[ \Xi_1\left( \tau_p \right)=1-p. \]

	The last step of the proof is the evaluation of the quantity $\Xi_0(\kappa_p)$. Recalling that $\kappa_p$ is
	an even function with respect to $h$ as well as the fact that all odd Bernoulli numbers except for $B_1$ are zero,
	i.e., $B_{2\nu+1}=0$, $\nu\ge 1$, we deduce the equation
	\[
	\Xi_0(\kappa_p)
	=\Xi_0\left( 1-C\chi(h) \right)
	=\Xi_0\left( 1-\left(1-e^{-2h^2}\right)^{\floor{p/2}}\left(1-e^{-h^2}\right)^{\ceil{p/2} -\floor{p/2}} \right).
	\]
	The definition of $\Xi_0$ reveals that $\Xi_0\left( h^\nu e^{-\mu h^2} \right)$ is independent of $\mu$.
	Consequently, we see that
	\[ \Xi_0(\kappa_p) = B_1=-\frac{1}{2}. \]
	This proves the asymptotics~(\ref{eq:height_moments_expectedvalue}) and completes the proof of the theorem.
\end{proof}
\begin{table}
\caption{
This table gives the coefficient of the dominant asymptotic term of $\E H_{n,p}^s$ as $n\to\infty$ for small values of $s$ and $p$
(see  Theorem~\ref{thm:height_moments}).}
\begin{tabular}{c|r|r|r}
\label{tab:moment_height_dominant_coef}
		$s\kappa_{s}^{(p)}$ & \centering{$s=1$} & $s=2$ & $s=3$\\
		\hline
		$p=1$ & $\frac{1}{2}\sqrt\pi=0.88\dots$ & $1$ & $\frac{3}{4}\sqrt\pi=1.32\dots$\\
		$p=2$ & $\frac{2+\sqrt 2}{4}\sqrt\pi=1.51\dots$ & $\frac{5}{2}$ & $\frac{3(12+\sqrt 2)}{16}\sqrt\pi=4.45\dots$\\
		$p=3$ & $\frac{72+45\sqrt 2-16\sqrt 3}{96}\sqrt\pi=1.99\dots$ & $\frac{25}{6}$ & $\frac{1584+315\sqrt 2-32\sqrt 3}{385}\sqrt\pi=9.11\dots$\\
		$p=4$ & $\frac{10368+17091\sqrt 2-3776\sqrt 3}{20736}\sqrt{\pi}=2.39\dots$ & $\frac{1915}{324}$ & $\frac{520992+165969\sqrt 2-29824\sqrt{3}}{82944}\sqrt\pi=15.04\dots$\\
\end{tabular}
\end{table}
Table~\ref{tab:moment_height_dominant_coef} shows the constant of the dominant asymptotic term as $n\to\infty$ for the
$s$-th moment of the height distribution for small values of $s$ and $p$.

\section{Range}\label{sec:range}
We determine the asymptotics for $n\to\infty$ of
\begin{equation}\label{eq:range_cdf_exact}
\P\left\{ R_{n,p}\le r \right\}
	=\frac{1}{m_{n}^{(p)}}\sum_{h=2p-2}^{r}\left( m_{n,h+1,r-h+1}^{(p)}-m_{n,h,r-h+1}^{(p)} \right).
\end{equation}
Note that $m_{n,h+1,r-h+1}^{(p)}-m_{n,h,r-h+1}^{(p)}$ is the number of watermelons with height exactly $h$ and range $\le r$.

\begin{theorem}\label{thm:range_weaklimit}
	For each fixed $t\in(0,\infty)$ we have the asymptotics
	\begin{equation}
			\P\left\{ \frac{R_{n,p}+1}{\sqrt n}\le t \right\}\to
			\frac{2^{-\binom{p}{2}}}{\prod_{i=0}^{p-1}i!}
			\int_0^t\left(\left. \frac{d}{dz}T_p(z,w)\right|_{z=t} \right)dw,
			\qquad n\to\infty,
	\end{equation}
	where
\[ T_p(z,w)=\det_{0\le i,j<p}\left( (-1)^i\left(\sum_{\ell\in\Z}H_{i+j}(\ell z)e^{-(\ell z)^2} \right)
-\left(\sum_{\ell\in\Z}H_{i+j}\left( \ell z+w \right)e^{-(\ell z+w)^2}\right)\right).
\]
%
Here, $H_a$ denotes the $a$-th Hermite polynomial.
\end{theorem}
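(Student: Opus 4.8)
The plan is to follow the route used for Theorem~\ref{thm:height:weak_limit}, now applied to the determinantal formula for $m_{n,h,k}^{(p)}$ (the first lemma of Section~\ref{sec:preliminaries}), and then to convert the sum over $h$ in \eqref{eq:range_cdf_exact} into an integral over $w\in(0,t)$. First I would introduce the generalised quantity
\[
m_{n,h,k}^{(p)}(\vec x,\vec y)=\det_{0\le i,j<p}\left(\sum_{\ell\in\Z}\left(\binom{2n}{n+\ell(h+k)+x_i-y_j}-\binom{2n}{n+\ell(h+k)+h-x_i-y_j}\right)\right),
\]
factor $\binom{2n}{n}$ out of each row and replace every binomial quotient by its expansion from Lemma~\ref{lem:approx_binom_quot}. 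Writing $z=(h+k)/\sqrt n$ and $w=h/\sqrt n$, the term coming from the first binomial is governed by $T_{u;N}(\ell(h+k),n)\approx e^{-(\ell z)^2}H_u(\ell z)/u!$ and the shifted term by $T_{u;N}(\ell(h+k)+h,n)\approx e^{-(\ell z+w)^2}H_u(\ell z+w)/u!$. Since $z\to t>0$, the contributions of large $|\ell|$ are exponentially small, so the effectively finite $\ell$-sum may be completed to $\sum_{\ell\in\Z}$ at negligible cost. The resulting $D_N(\vec x,\vec y)$ is polynomial in the $x_i,y_j$ and vanishes when two $x_i$ or two $y_j$ coincide, hence is divisible by $\prod_{i<j}(x_j-x_i)(y_j-y_i)$; comparing the coefficient of $\prod_j x_j^j y_j^j$ exactly as in Theorem~\ref{thm:height:weak_limit} makes the Hermite structure emerge and yields the local asymptotics
\[
m_{n,h,k}^{(p)}=n^{-\binom p2}\binom{2n}{n}^p\left(T_p(z,w)+O\left(n^{-1/2}\right)\right),\qquad n\to\infty.
\]

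Next I would insert this into the difference appearing in \eqref{eq:range_cdf_exact}. Both summands are of the above form, with parameters $(z,w)=\bigl((r+2)/\sqrt n,(h+1)/\sqrt n\bigr)$ and $\bigl((r+1)/\sqrt n,h/\sqrt n\bigr)$ respectively, so the difference is a discrete derivative in the direction $(1,1)$:
\[
m_{n,h+1,r-h+1}^{(p)}-m_{n,h,r-h+1}^{(p)}=n^{-\binom p2}\binom{2n}{n}^p\frac{1}{\sqrt n}\left(\frac{\partial}{\partial z}+\frac{\partial}{\partial w}\right)T_p(z,w)+O\left(n^{-\binom p2}\binom{2n}{n}^p n^{-1}\right),
\]
evaluated at $z=(r+1)/\sqrt n$, $w=h/\sqrt n$. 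Setting $r+1=t\sqrt n$ (so $z=t$) and using Lemma~\ref{lem:total_num_asymptotics} to simplify $n^{-\binom p2}\binom{2n}{n}^p/m_n^{(p)}\to 2^{-\binom p2}/\prod_{i=0}^{p-1}i!$, the sum over $h$ becomes a Riemann sum with mesh $\Delta w=1/\sqrt n$ over $w\in(0,t)$. The factor $\sqrt n$ from the sum cancels the $1/\sqrt n$ from the discrete derivative, giving
\[
\P\left\{\frac{R_{n,p}+1}{\sqrt n}\le t\right\}\to\frac{2^{-\binom p2}}{\prod_{i=0}^{p-1}i!}\int_0^t\left.\left(\frac{\partial}{\partial z}+\frac{\partial}{\partial w}\right)T_p(z,w)\right|_{z=t}dw.
\]

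Finally I would show that the $\partial_w$ contribution integrates to zero, reconciling this with the stated formula. By the fundamental theorem of calculus, $\int_0^t\partial_w T_p(t,w)\,dw=T_p(t,t)-T_p(t,0)$. In $T_p(z,0)$ every entry equals $((-1)^i-1)\sum_\ell H_{i+j}(\ell z)e^{-(\ell z)^2}$, so the row $i=0$ vanishes and $T_p(z,0)=0$; in $T_p(z,z)$ the shift $w=z$ lets one reindex $\ell\mapsto\ell-1$ in the second sum, again reducing each entry to $((-1)^i-1)\sum_\ell H_{i+j}(\ell z)e^{-(\ell z)^2}$, whence $T_p(z,z)=T_p(z,0)=0$. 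Thus the boundary term vanishes and only $\int_0^t\partial_z T_p(t,w)\,dw$ survives, which is the claim.

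The main obstacle I expect is the first step: establishing the local asymptotics for $m_{n,h,k}^{(p)}$ uniformly in $h$ across the whole summation range (together with the justified truncation and re-extension of the $\ell$-sum), and controlling the error terms well enough that forming the discrete difference --- which is one order of $n^{-1/2}$ smaller than $T_p$ itself --- and passing to the Riemann sum are both legitimate. The coefficient extraction producing the Hermite structure is essentially identical to Theorem~\ref{thm:height:weak_limit}, and the uniform tail control over $h$ parallels the truncation argument in Lemma~\ref{lem:height_moments}; the pleasant surprise is the clean cancellation of the $\partial_w$ term via the two identities $T_p(t,0)=T_p(t,t)=0$.
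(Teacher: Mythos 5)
Your proposal is correct, and it reaches the result by a genuinely different organisation of the sum over $h$ than the paper does. The paper first performs a summation by parts on \eqref{eq:range_cdf_exact}, using $m_{n,2p-2,k}^{(p)}=0$ to rewrite the sum as a boundary term $m_{n,r+1,1}^{(p)}/m_n^{(p)}$ plus $\sum_{h}(m_{n,h,r-h+2}^{(p)}-m_{n,h,r-h+1}^{(p)})/m_n^{(p)}$; the remaining difference then varies only the first argument of $T_p$, so only $\partial_z T_p$ ever appears, and the boundary term is disposed of by quoting that $m_{n,r+1,1}^{(p)}$ counts $p$-watermelons \emph{with} a wall and is of order $\binom{2n}{n}^p n^{-p^2}$, hence negligible against $m_n^{(p)}\asymp\binom{2n}{n}^p n^{-\binom{p}{2}}$. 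You instead difference directly in the $(1,1)$ direction, pick up $(\partial_z+\partial_w)T_p$, and kill the $\partial_w$ contribution in the limit via the identities $T_p(t,0)=T_p(t,t)=0$ (row $i=0$ vanishes at $w=0$; reindexing $\ell\mapsto\ell-1$ reduces $w=z$ to the same situation). These are exact duals: your continuous boundary evaluation $T_p(t,t)-T_p(t,0)$ is the scaling limit of the paper's discrete telescoped term $m_{n,r+1,1}^{(p)}$ minus the vanishing term at $h=2p-2$. What your route buys is that it avoids importing the asymptotic order of wall-restricted watermelons from the companion paper, replacing it with two elementary determinant identities; what the paper's route buys is that one never has to argue that the $\partial_w$ piece cancels, since it is telescoped away before any asymptotics are taken. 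The genuinely delicate point --- that the relative error $O(n^{-1/2})$ in the local asymptotics for $m_{n,h,k}^{(p)}$ must be shown not to swamp a discrete difference that is itself of order $n^{-1/2}$, uniformly in $h$ --- is present in both arguments and is flagged but not resolved by you, exactly as the paper defers it to \cite[Theorem 2]{watermelons:withwall}.
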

\begin{proof}
Since $m_{n,2p-2,k}^{(p)}=0$ for any $k$, Equation~\eqref{eq:range_cdf_exact} can be rewritten as
\[
\P\left\{ R_{n,p}\le r \right\}
	=\frac{m_{n,r+1,1}^{(p)}}{m_{n}^{(p)}}
	+\frac{1}{m_{n}^{(p)}}\sum_{h=2p-1}^{r}\left( m_{n,h,r-h+2}^{(p)}-m_{n,h,r-h+1}^{(p)}  \right).
\]
The first term on the right-hand side is negligible. To see this, we note that $m_{n,r+1,1}$ is equal to the number of $p$-watermelons
with wall and height $\le r$, which is of order $\binom{2n}{n}^pn^{-p^2}$ as $n\to\infty$ (see \cite{watermelons:withwall} for details),
whereas $m_n^{(p)}$ is of order $\binom{2n}{n}^pn^{-\binom{p}{2}}$ (see Lemma~\ref{lem:total_num_asymptotics}).

Asymptotics for the sum on the right-hand side can now be established in a fashion analogous to the proof
of Theorem~\ref{thm:height:weak_limit}.  A more detailed presentation of these techniques can also be found
in \cite[Theorem 2]{watermelons:withwall}.
We find the asymptotics
\[
	\P\left\{ R_{n,p}\le r \right\} \sim
	\frac{\binom{2n}{n}^pn^{-\binom{p}{2}}}{m_{n}^{(p)}} 
	\sum_{h=2p-1}^{r}\left(T_p\left( \frac{r+2}{\sqrt n},\frac{h}{\sqrt n} \right)- T_p\left( \frac{r+1}{\sqrt n},\frac{h}{\sqrt n} \right) \right)
\]
as $n\to\infty$, where
\[
T_p(t,w)=\det_{0\le i,j<p}\left(
			(-1)^i\left(\sum_{\ell\in\Z}H_{i+j}(\ell t)e^{-(\ell t)^2} \right)
			-\left(\sum_{\ell\in\Z}H_{i+j}\left( \ell t+w \right)e^{-(\ell t+w)^2}\right)
\right).
\]

Now, Taylor series expansion shows that
\[
T_p\left( \frac{r+2}{\sqrt n},\frac{h}{\sqrt n} \right)- T_p\left( \frac{r+1}{\sqrt n},\frac{h}{\sqrt n} \right)
=\frac{1}{\sqrt n}T_p'\left( \frac{r+1}{\sqrt n},\frac{h}{\sqrt n} \right)+O\left( n^{-1} \right),\qquad n\to\infty,
\]
where $T'$ denotes the derivative of $T$ with respect to its first argument.
Setting $r+1=t\sqrt n$ we see that
\[
\sum_{h=2p-1}^{r}\left(T_p\left( \frac{r+2}{\sqrt n},\frac{h}{\sqrt n} \right)- T_p\left( \frac{r+1}{\sqrt n},\frac{h}{\sqrt n} \right) \right)
\sim \sum_{h=2p-1}^{r}\frac{1}{\sqrt n}T_p'\left( \frac{r+1}{\sqrt n},\frac{h}{\sqrt n} \right)
\to \int_0^{t}T'\left( t,w \right)dw
\]
as $n\to\infty$.

\end{proof}
\begin{remark}
	For the special case $p=1$ we recover a well-known fact originally proven by 
	Chung~\cite{MR0467948} and Kennedy~\cite{MR0402955}. Namely, the equality of the distributions
	of the height of Brownian excursions and the range of Brownian bridges. This result also
	follows from a more general relation between excursions an bridges proved by Vervaat~\cite{0392.60058}.

	In fact, for $p=1$ we have
	\[ \left.\frac{d}{dz}T_1(z,w)\right|_{z=t}=-\sum_{\ell\in\Z}2\ell^2te^{-(\ell t)^2}+2\sum_{\ell\in\Z}\ell(\ell t+w)e^{-(\ell t+w)^2}, \]
	which shows that
	\[ 	
	\P\left\{ \frac{R_{n,1}+1}{\sqrt n}\le t \right\}\to \sum_{\ell\in\Z}\left( 1-2(\ell t)^2 \right)e^{-(\ell t)^2},
	\qquad n\to\infty,
	\]
	by Theorem~\ref{thm:range_weaklimit}.
	This shows that the distribution of the range of $1$-watermelons without wall weakly converges to the limiting distribution
	of the height of $1$-watermelons with wall (see \cite{watermelons:withwall}).
\end{remark}

\bibliographystyle{plain}
\bibliography{pmelons}

\end{document}